\def\newthm#1#2{\newtheorem{#1}[dummy]{#2}%
  \expandafter\def\csname#2\endcsname##1{\hyperref[#1:##1]{#2~\ref*{#1:##1}}}}
\theoremstyle{definition}
\newcommand{\Section}[1]{\hyperref[sec:#1]{Section~\ref*{sec:#1}}}
\newcommand{\Table}[1]{\hyperref[tab:#1]{Table~\ref*{tab:#1}}}
\newcommand{\eqn}[1]{\hyperref[eqn:#1]{(\ref*{eqn:#1})}}
\newcommand{\Figure}[1]{\hyperref[fig:#1]{Figure~\ref*{fig:#1}}}
\def\namedlabel#1#2{\begingroup
    #2%
    \def\@currentlabel{#2}%
    \phantomsection\label{#1}\endgroup
}
\DeclareMathOperator{\GL}{GL}
\DeclareMathOperator{\Gr}{Gr}
\DeclareMathOperator{\Fl}{Fl}
\DeclareMathOperator{\QH}{QH}
\DeclareMathOperator{\QK}{QK}
\DeclareMathOperator{\K}{K}
\DeclareMathOperator{\HH}{H}
\DeclareMathOperator{\pt}{pt}
\DeclareMathOperator{\codim}{codim}
\DeclareMathOperator{\Rep}{Rep}
\def\poly{{\mathrm{poly}}}
\def\loc{{\mathrm{loc}}}
\DeclareMathOperator{\supp}{supp}
\DeclareMathOperator{\ch}{\ch}
\DeclarePairedDelimiter{\angles}{\langle}{\rangle}
\newcommand{\C}{{\mathbb C}}
\newcommand{\Z}{{\mathbb Z}}
\newcommand{\cF}{{\mathcal F}}
\newcommand{\cO}{{\mathcal O}}
\newcommand{\cQ}{{\mathcal Q}}
\newcommand{\cS}{{\mathcal S}}
\newcommand{\fS}{{\mathfrak S}}
\newcommand{\egw}[2]{\angles{ #1 }^T_{#2}}
\newcommand{\la}{{\lambda}}
\DeclareMathOperator{\ev}{ev}
\DeclareMathOperator{\Jac}{Jac}
\newcommand{\wb}{\overline}
\newcommand{\ignore}[1]{}
\newcommand{\Mb}{\wb{\mathcal M}}
\newcommand{\M}{\overline{M}}
\begin{document}

\title
[QK Whitney]
{Quantum K Whitney relations for partial flag varieties}

\author{Wei Gu}
\address{Max-Planck-Institute f\"{u}r Mathematik, Vivatsgasse 7, D-53111 Bonn, Germany}
\email{guwei@mpim-bonn.mpg.de}

\author{Leonardo C.~Mihalcea}
\address{
Department of Mathematics, 
225 Stanger Street, McBryde Hall,
Virginia Tech University, 
Blacksburg, VA 24061
USA
}
\email{lmihalce@vt.edu}

\author{Eric Sharpe}
\address{Department of Physics MC 0435, 850 West Campus Drive,
Virginia Tech University, Blacksburg VA  24061
USA}
\email{ersharpe@vt.edu}

\author{Weihong Xu}
\address{Division of Physics, Mathematics,  and Astronomy,
Caltech, 1200 E. California Blvd.,
Pasadena CA 91125  USA}
\email{weihong@caltech.edu}

\author{Hao Zhang}
\address{Department of Physics MC 0435, 850 West Campus Drive,
Virginia Tech University, Blacksburg VA  24061
USA}
\email{hzhang96@vt.edu}

\author{Hao Zou}
\address{Beijing Institute of Mathematical Sciences and Applications,
Beijing 101408, China}
\address{Yau Mathematical Sciences Center,
Tsinghua University,
Beijing 100084, China}
\email{hzou@vt.edu}

\thanks{WG was partially supported by NSF grant PHY-1720321. LM was partially supported by NSF grant DMS-2152294 and a Simons Collaboration Grant. 
ES was partially supported by NSF grant PHY-2310588. HZ was partially supported by the China Postdoctoral Science Foundation with grant No. 2022M720509}

\subjclass[2020]{Primary 14M15, 14N35, 81T60; Secondary 05E05}
\keywords{Quantum K theory, partial flag varieties, incidence varieties,
Whitney relations, Coulomb branch.}

\begin{abstract} In a recent paper, we stated conjectural presentations 
for the equivariant quantum K ring of partial
flag varieties,
motivated by physics considerations. In this companion paper, we analyze these presentations mathematically. We start by proving a Nakayama type result for quantum K theory: if the conjectured set 
of relations deforms a complete set of relations of the classical K theory ring, 
then they must form a complete set of relations for the quantum K ring. We prove
the conjectured presentation in the case of
the incidence varieties, and we show that if a quantum K divisor axiom holds (as conjectured by Buch and Mihalcea), then 
the conjectured presentation also holds for the complete flag variety. Finally, we 
briefly revisit the change of variables relating the mathematics and physics presentations.
\end{abstract}

\maketitle

\setcounter{tocdepth}{1}
\tableofcontents

\section{Introduction}\label{sec:intro}

In \cite{Gu:2020zpg}, conjectural presentations by generators 
and relations for 
the quantum \(\K\)-theory rings of the ordinary Grassmanians were stated. These presentations 
come in two flavors: a ``Coulomb branch" presentation which arises in physics 
as the critical locus of a certain one-loop twisted superpotential associated 
to a gauged linear sigma model (GLSM) (cf.~\S \ref{sec:physics} below), 
and a ``quantum K Whitney" presentation which arises in mathematics 
from a quantization of the classical Whitney relations. 
A mathematical proof of these presentations was given in 
\cite{gu2022quantum}, in the more general equivariant setting; an alternative proof was recently given in \cite{sinha2024quantum}. Continuing the work in \cite{gu2022quantum}, 
we recently conjectured in \cite{gu2023quantum} an extension of these presentations from 
Grassmannians to any partial flag variety, and we analyzed these conjectures 
from the physics point of view. 

Our goal in this paper is to investigate these presentations 
mathematically.
We prove that our conjectures hold in the case of incidence varieties 
$\Fl(1,n-1;n)$ which parametrize flags of vector spaces $F_1 \subset F_2 \subset \C^n$ with 
$\dim F_1 = 1 $ and \(\dim F_2=n-1\). The conjectural presentation also holds for 
the complete flag varieties, conditional on the validity of an unpublished conjecture by 
Buch and Mihalcea about a divisor axiom in quantum 
\(\K\)-theory; cf.~\Cref{conj:full} below. In the upcoming paper \cite{AHMOX}, 
we will use these results
to deduce the presentation for the quantum K ring of any partial flag manifold, by
using Kato's push-forward in quantum K theory \cite{kato:partial}, see also 
\cite{kouno.et.al:parabolic}. A very recent different proof, contemporaneous
with the latter work, has been announced in \cites{huq2024quantum}, 
using a quantum \(\K\) version of the abelian/non-abelian correspondence.

A key feature of all these presentations 
is that a complete set of relations is obtained by deforming each 
of the relations in the classical Whitney presentations; no other relations are needed. 
For quantum cohomology - a graded ring - this statement goes back to 
Siebert and Tian \cite{siebert.tian:on}, and uses a graded version of Nakayama's lemma.
The quantum K-theory ring is not graded, and a more careful analysis is needed, which requires
working over completed rings, and checking a module finiteness hypothesis. 
In this paper we revisit the treatment from \cite{gu2022quantum} on this problem (cf.~ \Cref{prop:Nakiso}), and we find natural hypotheses under which a finiteness condition needed in the 
K-theoretic version of Siebert and Tian criterion holds. The main
statements are found in the Appendix \ref{app:fg}. We believe this will be valuable  
in applications to more general situations.

Finally, we also take the opportunity to clarify the relation between the presentations
arising from the ``QK Whitney'' and the ``Coulomb branch'' relations. In this paper
we use a set of variables based on K-theoretic Chern
roots - different from the choice in \cite{gu2022quantum} - which makes it clear that the two presentations, while not equal, only differ by a change of variables obtained by dividing by some factors of the form $1-q_j$; see \Cref{prop:iso-Coulomb-Whitney} and 
\Cref{remark:general}. 

As explained in \cite{gu2023quantum},
our Whitney presentation may be viewed as a \(\K\)-theoretic analogue of a 
presentation obtained by Gu and Kalashnikov \cite{GuKa} 
of the quantum cohomology ring of quiver flag varieties.
A presentation of the equivariant quantum K ring of the complete flag varieties $\Fl(n)$
was recently proved by Maeno, Naito and Sagaki \cite{maeno.naito.sagaki:QKideal}, and  
it is related to the 
Toda lattice presentation from quantum cohomology \cites{givental.kim, kim.toda}. 
In an upcoming paper \cite{AHMOX}, we will rederive this Toda-type presentation 
as a consequence of our Whitney presentation. At its root, this reflects the fact that the Whitney presentation encodes quantum products between (tautological) {\em vector} bundles and their quotients, while the Toda presentation only deals with the quotient line bundles. Relations similar to those from \cites{maeno.naito.sagaki:QKideal,givental.kim} 
also appear in \cite{givental.lee:quantum,anderson.et.al:finiteI}, 
in relation to the finite-difference Toda lattice; in \cite{koroteev}, in relation to the study
of the quasimap quantum \(\K\)-theory of the cotangent bundle of $\Fl(n)$ and the 
Bethe Ansatz (see also \cite{Gorbounov:2014}); 
and in \cite{ikeda.iwao.maeno}, in relation to the Peterson isomorphism in quantum \(\K\)-theory.

\subsection{Statement of results}
We provide next a more precise account of our results.
Let \(X=\Fl(r_1,\dots,r_k;n)\) be a partial flag variety, equipped 
with the flag of tautological vector bundles 
\[
    0=\cS_{0}\subset\cS_{1}\subset\ldots \subset \cS_{k}\subset\cS_{{k+1}}=\C^n,
\]  
where \(\cS_{j}\) has rank \(r_j\). The variety \(X\) is a homogeneous space for the left action 
by $G:=\GL_n(\C)$, and we denote by $T \subset G$ the maximal torus consisting 
of diagonal matrices. The $T$-equivariant quantum K ring $\QK_T(X)$, defined
by Givental and Lee \cites{givental:onwdvv,lee:QK}, is a deformation of the Grothendieck ring $\K_T(X)$ of $T$-equivariant vector bundles on $X$. Additively,
$\QK_T(X) = \K_T(X) \otimes_{\K_T(\pt)} \K_T(\pt)[\![q]\!]$ where 
$\K_T(\pt) = \Rep(T)$ is the 
representation ring of $T$ and $\K_T(\pt)[\![q]\!]:=\K_T(\pt)[\![q_1, \ldots , q_k]\!]$ is a power series 
ring in the sequence of quantum parameters $(q_i)$ indexed by a 
basis of $\HH_2(X)$. This $\K_T(\pt)[\![q]\!]$-module is
equipped with a quantum product $\star$ which gives \(\QK_T(X)\) the structure of 
a commutative, associative, $\K_T(\pt)[\![q]\!]$-algebra. For $E \to X$ an equivariant
vector bundle of rank $\mathrm{rk}~E$ we denote by 
\[ \lambda_y(E) := 1 + y [E] + \ldots + y^{\mathrm{rk}\,E} [\wedge^{\mathrm{rk}\,E} E] \quad \in \K_T(X)[y] \]
the Hirzerbruch $\lambda_y$ class of $E$. This class is multiplicative for short exact sequences. {In an abuse of notation, we sometimes write \(E\) for the class \([E]\) in \(\K_T(X)\).}

The following is our main conjecture, also stated in the companion paper \cite{gu2023quantum}. 
\begin{conj}\label{conj:lambda_y} For  \(j=1,\dots, k\), the following relations hold in $\QK_T(X)$:
        \begin{equation}\label{eqn:lambda_y_rel_intro}
            \lambda_y(\cS_{j})\star\lambda_y(\cS_{{j+1}}/\cS_{j})=\lambda_y(\cS_{{j+1}})-y^{r_{j+1}-r_j}\frac{q_j}{1-q_j}\det(\cS_{{j+1}}/\cS_{j})\star(\lambda_y(\cS_{j})-\lambda_y(\cS_{{j-1}})).
        \end{equation} 
Here, $\lambda_y(\cS_{k+1})=\lambda_y(\C^n) = \prod_{i=1}^n (1+yT_i)$, where $T_i \in \K_T(\pt)$ are given by
the decomposition of $\C^n$ into one dimensional $T$-modules.
\end{conj}
{Despite finiteness results for \(\QK_T(X)\) proved in \cite{anderson2022finite} and \cite{kato:loop}*{Cor. 4.16}, we note that inverting \(1-q_j\) is essential, see \Cref{rmk:poly}.}

If $k=1$, i.e., if $X=\Gr(r,n)$, these relations were conjectured in \cite{Gu:2020zpg}, 
and proved in \cite{gu2022quantum}.  
Specializing all $q_j$ to \(0\) recovers the usual Whitney relations in $\K_T(X)$ obtained
from the short exact sequences 
\[
    0 \to \cS_{j} \to \cS_{{j+1}} \to \cS_{{j+1}}/\cS_{j} \to 0.
\] 
These relations may be formalized by introducing abstract variables
$X^{(j)}=(X^{(j)}_1,\dots,X^{(j)}_{r_j}) $ and $Y^{(j)}=(Y^{(j)}_1,\dots,
{ Y^{(j)}_{r_{j+1}-r_j}})$
for the exponentials of the Chern roots
of $\cS_j$ and $\cS_{j+1}/\cS_j$, respectively; see \eqref{defn:Iq} below. 
Set 
\[
    S\coloneqq \K_T(\pt)[e_1(X^{(j)}),\dots, e_{r_j}(X^{(j)}),e_1(Y^{(j)}),\dots,e_{r_{j+1}-r_j}(Y^{(j)}), j=1,\dots, k], 
\]
and let \(I_q\subset S[\![q]\!]\) be
the ideal {generated by the coefficients of \(y\) in \eqref{eqn:lambda_y_rel_intro}}. Our first result is the following, cf.~\Cref{thm:all_relations}.

\begin{thm}\label{thm:complete-intro}
Assume \Cref{conj:lambda_y} holds. Then the relations
\eqn{lambda_y_rel_intro} form a complete set of relations,
i.e., there is an isomorphism 
\[ \Psi:  {S[\![q]\!]}/I_q\to \QK_T(X) \]
of \(\K_T(\pt)[\![q]\!]\)-algebras sending \(e_\ell(X^{(j)})\) to \(\wedge^\ell(\cS_{j})\) and \(e_\ell(Y^{(j)})\) to \(\wedge^\ell(\cS_{{j+1}}/\cS_{j})\).
\end{thm}
The proof of this theorem follows a strategy developed in \cite{gu2022quantum},
generalizing a classical result by Siebert and Tian \cite{siebert.tian}
(see also \cite{fulton.pandh:notes}) about the quantum cohomology ring. 
Roughly speaking, in order to find the ideal of quantum relations, it suffices to find a presentation
of the classical ring, then quantize each relation in this presentation. The quantum
cohomology version follows easily from a graded version of Nakayama's lemma. The
version needed for quantum \(\K\)-theory, is more subtle, and is 
given in \Cref{prop:Nakiso} below. A key hypothesis needed in this proposition is 
that the claimed presentation is finitely generated as a $\K_T(\pt)[\![q]\!]$-module.
We prove this in Appendix \ref{app:fg} (see~\Cref{prop:KNak}) in a rather general 
context about modules over formal power series rings; {it is closely 
related to statements from \cite{eisenbud:CAbook}.}
This method might be of use for proving presentations for more general cases.
In this paper we prove \Conjecture{lambda_y} for the incidence varieties $\Fl(1,n-1;n)$, cf. \Theorem{rels} and \Theorem{incidence}.
We denote by $q_1, q_2$ the quantum parameters corresponding to the Schubert 
curves indexed by the simple reflections $s_1=(1,2), s_{n-1}=(n-1, n)$, respectively.

\begin{thm}\label{thm:main-intro1} \Cref{conj:lambda_y} holds for $\Fl(1,n-1;n)$. More
explicitly, the following relations hold in $\QK_T(\Fl(1,n-1;n))$, and they form a complete set of relations:
\begin{equation*}
            \lambda_y(\cS_{1})\star\lambda_y({\cS_{2}}/\cS_{1})=\lambda_y({\cS_{2}})-y^{n-2}\frac{q_1}{1-q_1}\det({\cS_{2}}/\cS_{1})\star(\lambda_y(\cS_{1})-1) \/;
        \end{equation*} 
\begin{equation*}
            \lambda_y({\cS_{2}})\star\lambda_y(\C^n/{\cS_{2}})=\lambda_y(\C^n)-y\frac{q_2}{1-q_2}\det(\C^n/{\cS_2})\star(\lambda_y({\cS_{2}})-\lambda_y(\cS_{1})) \/.
        \end{equation*} 

Here, $\lambda_y(\C^n) = \prod_{i=1}^n (1+yT_i)$, where $T_i \in \K_T(\pt)$ are given by the decomposition of $\C^n$ into one dimensional $T$-modules. 
\end{thm}
The proof of \Cref{thm:main-intro1} relies on a recent result of Xu 
\cite{xu2021quantum}, which proves a conjecture 
by Buch and Mihalcea in the case of incidence varieties 
\(\Fl(1,n-1;n)\); cf.~\Cref{conj:full} below. 
This conjecture, which may be thought of as a divisor axiom in 
quantum \(\K\)-theory, {gives a formula for computing $3$-point (equivariant) 
\(\K\)-theoretic Gromov-Witten 
invariants 
when one of the insertions is a line bundle class such as \(\det(\cS_j)\). Our strategy is to reduce the relations in \(\QK_T(\Fl(1,n-1;n))\) into a form 
involving no other quantum multiplications except for those of the form \(\det(\cS_{j})\star a\), turn these relations into identities involving $3$-pointed (equivariant) 
\(\K\)-theoretic Gromov-Witten 
invariants, apply the aforementioned formula, and do some computations in ordinary (equivariant) \(\K\)-theory.}
In addition, Xu proved in \cite{xu2021quantum} that the Schubert classes
in $\QK_T(\Fl(1,n-1;n))$ are generated by $\cS_1$ and $\det (\cS_2)$
over $\K_T(\pt)[q]$. As a consequence, one may 
replace the power series ring $\K_T(\pt)[\![q]\!]$ by the localized ring
$\K_T(\pt)[q]_{1 + \langle q \rangle}$. 

Aside from the incidence varieties, the quantum K divisor axiom (\Cref{conj:full}) is known for cominuscule Grassmannians
\cites{buch.m:qk,chaput.perrin:rationality}. A natural question is whether more cases of \Cref{conj:lambda_y} may be proved assuming the validity of \Cref{conj:full}. This leads to 
our next result, cf. \Corollary{fln}.

\begin{thm} Assume the quantum K divisor axiom (i.e., \Cref{conj:full}) holds for the complete flag variety $\Fl(n)$. Then 
the relations \eqref{eqn:lambda_y_rel_intro} hold in $\QK_T(\Fl(n))$. Equivalently, \Cref{conj:full} implies 
\Cref{conj:lambda_y} for \(\Fl(n)\).\end{thm}
The proof of this statement employs the technique of 
curve neighborhoods developed in \cite{buch.m:nbhds}, see also
\cite{BCMP:qkfin}. For an effective degree $d \in \HH_2(X)$, we utilize an interpretation 
of the curve neighborhood $a[d]$ of an element 
$a \in \K_T(X)$ 
{as $\partial_{z_d}(a)$, the result of applying an iterated Demazure operator,}
where $z_d$ is a permutation defined in \cite{buch.m:nbhds}. This interpretation,
also utilized in \cite{li.mihalcea}, allows us to 
compute the curve neighborhoods
of the exterior powers of tautological bundles 
$\wedge^\ell \cS_i$; see \Cref{lemma:dem-bundles}. 
The key {calculation} is \Cref{cor:partialzd}, which establishes an equality between 
{curve neighborhoods of exterior powers of adjacent tautological bundles, when the degrees differ by a simple (co)root.}

{\em Acknowledgments.}
We thank Anders Buch, Linda Chen, Elana Kalashnikov, Peter Koroteev, Y.P.~Lee, 
and Henry Liu for helpful discussions. Special thanks are due to Prof.~Satoshi 
Naito for many inspiring discussions, 
and for pointing out some references utilized in the Appendix. Finally, 
we thank the Simons Center for Geometry and Physics for supporting the excellent workshop GLSM@30, which made possible many stimulating discussions.

\section{Equivariant K-theory}\label{sec:preliminaries} 

\subsection{Preliminaries}
In this section we recall some basic facts about the equivariant 
K-theory of a variety with a group action. For an introduction to equivariant K theory, and more details, see \cite{chriss2009representation}. 

Let $X$ be a smooth projective variety with an action of a linear algebraic group $G$. The equivariant K theory ring $\K_G(X)$ is the Grothendieck ring generated by symbols $[E]$, where $E \to X$ is a $G$-equivariant vector bundle, modulo the relations $[E]=[E']+[E'']$ for any short exact sequence $0 \to E' \to E \to E'' \to 0$ of equivariant vector bundles. The additive ring structure is given by direct sum, and the multiplication is given by tensor products of vector bundles. 
Since $X$ is smooth, any $G$-linearized coherent sheaf has a finite resolution by (equivariant) vector bundles, and the ring $\K_G(X)$ coincides with the Grothendieck group of $G$-linearized coherent sheaves on $X$. In particular, any 
$G$-linearized coherent sheaf $\mathcal{F}$ on $X$ determines a class $[\mathcal{F}] \in \K_G(X)$. 
As an important special case, if $\Omega \subset X$ is a $G$-stable subscheme, then its structure sheaf 
determines a class $[\cO_\Omega] \in \K_G(X)$. 
We shall abuse notation and sometimes write \(\cF\) or $\cO_\Omega$ for the corresponding classes 
$[\cF]$ and $[\cO_\Omega]$ in $\K_G(X)$. 

The ring $\K_G(X)$ is an algebra over $\K_G(\pt) = \Rep(G)$, the representation ring of 
$G$. If $G=T$ is a complex torus, then this is the Laurent polynomial ring 
$\K_T(\pt) = \Z[T_1^{\pm 1}, \ldots, T_n^{\pm 1}]$ where 
$T_i:=\C_{t_i}$ are characters corresponding to a basis of the 
Lie algebra of $T$. 

Let $E \to X$ be an equivariant
vector bundle of rank $\mathrm{rk}\,E$. The (Hirzebruch) $\lambda_y$ class is defined as 
\[ \lambda_y(E) := 1 + y E + \ldots + y^{\mathrm{rk}\,E} \wedge^{\mathrm{rk}\,E} E \quad \in \K_T(X)[y]. \] 
This class was introduced by Hirzebruch \cite{hirzebruch:topological} in relation 
to the Grothendieck--Riemann--Roch theorem. 
The $\lambda_y$ class is multiplicative with respect to short exact sequences, i.e., if 
$0 \to E' \to E \to E'' \to 0$ is such a sequence of vector bundles, then 
\[ \lambda_y(E) = \lambda_y(E') \cdot \lambda_y(E''). \] 
This is part of the $\lambda$-ring structure of $\K_T(X)$, see e.g. \cite{nielsen:diag}, referring to \cite{SGA6}. 

A particular case of this construction is when $V$ is a (complex) vector space with an action 
of a complex torus $T$. The  $\lambda_y$ class of $V$ is the element 
$\lambda_y(V) =\sum_{i \ge 0} y^i \wedge^i V \in \K_T(\pt)[y]$. Since \(V\) decomposes into $1$-dimensional $T$-representations: $V = \bigoplus_i \C_{\mu_i}$, 
it follows from the multiplicative property of $\lambda_y$ classes that 
\(\lambda_y(V) = \prod_i (1+y\,\C_{\mu_i})\). 

Since $X$ is proper, we can push the class of a sheaf forward to the point. This is given by the sheaf Euler 
characteristic, or, equivalently, the virtual representation 
\[\chi_X^T(\mathcal{F}) 
:= \sum_i (-1)^i \HH^i(X, \mathcal{F})\quad\in \K_T(\pt) \/. \] 

\subsection{(Equivariant) \(\K\)-theory of flag varieties}\label{sec:kflag} The partial flag variety
$$X:=\Fl(r_1, \ldots, r_k;n)$$ parametrizes flags of vector spaces 
$F_{1} \subset F_{2} \subset \ldots \subset F_{k} \subset \C^n$
with $\dim F_{i} = r_i$ for $1 \le i \le k$. 
It is a projective manifold and admits a transitive action 
of $G=\GL_n(\C)$. We denote by $T$ the maximal torus in $G$ consisting of diagonal matrices.

Let $S_n$ be the symmetric group in $n$ letters, and let $S_{r_1, \ldots, r_k} \le S_n$ be 
the subgroup generated by simple reflections $s_i = (i,i+1)$ where $i \notin \{ r_1, \ldots, r_k\}$. 
{For $w \in S_n$, its length $\ell(w)$ is defined to be the minimal number of simple reflections with product $w$.}
Denote by $\ell:S_n \to \mathbb{N}$ the length function, and by  
$S^{r_1, \ldots, r_k}$ the set of minimal length representatives of 
$S_n/S_{r_1, \ldots, r_k}$. This consists
of permutations $w \in S_n$ which have descents at most at positions 
$r_1, \ldots, r_k$, i.e., $w(r_j+1)< \ldots <w(r_{j+1})$, for $j=0, \ldots, k$, with the convention that $r_0=0$ and $r_{k+1} =n$. 

The \(T\)-fixed points $e_w \in X$ are indexed by the permutations 
$w \in S_n^{r_1, \ldots, r_k}$. {Let \(B,\ B^-\subset G\) be the Borel subgroups of upper and lower triangular matrices, respectively.}
For each \(T\)-fixed point, {the \(B\)-stable Schubert variety \(X_w=\overline{B.e_w}\) and \(B^-\)-stable Schubert variety \(X^w=\overline{B^-.e_w}\) are closures of the \(B\) and \(B^-\) orbits in \(X\), respectively. We have \(\dim X_w=\codim X^w=\ell(w)\). Let \(\cO_w=[\cO_{X_w}]\) and \(\cO^w=[\cO_{X^w}]\) be the classes in \(\K_T(X)\) determined by the structure sheaves of \(X_w\) and \(X^w\), respectively. The ring \(\K_T(X)\) is a free module over 
$\K_T(\pt)$ with a basis given by these Schubert classes:
\[ \K_T(X) = \bigoplus_{w \in S_n^{r_1,\ldots, r_k}} \K_T(\pt) \cO_w = \bigoplus_{w \in S_n^{r_1,\ldots, r_k}} \K_T(\pt) \cO^w \/. \]} 

Denote by 
$0=\cS_{0}\subset\cS_{1}\subset\dots \subset \cS_{k}\subset\cS_{{k+1}}=\C^n$  
the flag of tautological vector bundles on $X$, where \(\cS_{j}\) has rank \(r_j\). 
Since we could not find a precise reference,
we will take the opportunity to outline a proof 
for a  (folklore) presentation by generators and relations 
of $\K_T(\Fl(r_1, \ldots, r_k;n))$. The relations 
\begin{equation*}
    \lambda_y(\cS_{j})\cdot\lambda_y(\cS_{{j+1}}/\cS_{j})=\lambda_y(\cS_{{j+1}}),\quad j=1,\dots,k
\end{equation*}
arise from the Whitney relations
applied to the exact sequences 
\[
    0 \to \cS_{{j}} \to \cS_{j+1} \to \cS_{j+1}/\cS_{{j}} \to 0, \quad j=1,\dots,k,
\] 
and are 
specializations with $q_j =0, j=1,\dots,k$ of the relations from
\Cref{conj:lambda_y}. This presentation is related
to well-known presentations such as that in
\cite{lascoux:anneau}*{\S 7}.

More precisely, let 
\[
    {X}^{(j)}=(X^{(j)}_1,\dots,X^{(j)}_{r_j})\text{ and }{Y}^{(j)}=(Y^{(j)}_1,\dots,
{Y^{(j)}_{r_{j+1}-r_j}})
\] 
denote formal variables for \(j=1,\dots,k\).
Let \(X^{(k+1)}\coloneqq (T_1,\dots,T_n)\) be the equivariant parameters in \(\K^T(\pt)\). Geometrically, the variables $X_i^{(j)}$ and $Y_s^{(j)}$ arise from the splitting principle:
\[ \lambda_y(\cS_j) = \prod_i (1+y X_i^{(j)}) \/, \quad  \lambda_y(\cS_{j+1}/\cS_{j}) = \prod_s (1+y Y_s^{(j)}), \]
i.e., they are the K-theoretic Chern roots of 
$\cS_{j}$ and $\cS_{j+1}/\cS_{{j}}$, respectively. Let \(e_\ell({X}^{(j)})\) and \(e_\ell({Y}^{(j)})\) be the \(\ell\)-th elementary symmetric polynomials in \({X}^{(j)}\) and \({Y}^{(j)}\), respectively. 
Denote by $S$ the (Laurent) polynomial ring 
\[ 
 S\coloneqq\K_T(\pt)[e_i(X^{(j)}), e_s(Y^{(j)}); 1 \le j \le k, 1 \le i \le r_j, 1 \le s \le {r_{j+1}-r_j}]
\]
and define the ideal $I \subset S$ generated by 
\begin{equation}\label{eqn:rel_classical}
    \sum_{i+s=\ell} e_i(X^{(j)}) e_s(Y^{(j)}) - e_\ell(X^{(j+1)})\/, \quad 1 \le j \le k \/. 
\end{equation}

\begin{prop}\label{prop:classical-Whitney} There is an isomorphism of $\K_T(\pt)$-algebras:
\[ \Psi: S/I \to \K_T(X) \]
sending $e_i(X^{(j)}) \mapsto \wedge^i \cS_{j}$ and 
$e_i(Y^{(j)}) \mapsto \wedge^i {(\cS_{{j+1}}/\cS_{{j}})}$. 
\end{prop}
\begin{proof} Denote the conjectured presentation ring by $A$. The K theoretic Whitney relations
imply that $\lambda_y(\cS_j) \cdot \lambda_y(\cS_{j+1}/\cS_j) = \lambda_y(\cS_{j+1})$.
Then the geometric interpretation of the variables $X^{(j)}, Y^{(j)}$ in terms of the splitting principle 
before the theorem implies that $\Psi$ is a well-defined $\K_T(\pt)$-algebra homomorphism. 

To prove surjectivity of $\Psi$, we first consider the case when $X=\Fl(n)$ is the full flag variety, and 
we utilize the theory of double Grothendieck polynomials
\cite{fulton.lascoux,buch:grothendieck}. {It was proved in \cite{buch:grothendieck}*{Thm. 2.1} that 
each Schubert class in $\K_T(X)$ may be written as a (double Grothendieck) polynomial in  
\begin{equation*}\label{E:vars} 
1-(\C^n/\cS_{n-1})^{-1}, 1- (\cS_{n-1}/\cS_{n-2})^{-1}, \ldots , 1 - (\cS_2/\cS_1)^{-1}, 1-(\cS_1)^{-1} 
\end{equation*}
with coefficients in $\K_T(\pt)$. Note that in $\K_T(X)$, 
\begin{align*}
    (\cS_{i}/\cS_{i-1})^{-1}
    =&\det(\C^n)^{-1}\cdot\C^n/\cS_{n-1}\cdots{\cS_{i+1}/\cS_{i}}\cdot\cS_{i-1}/\cS_{i-2}\cdots\cS_2/\cS_1\cdot\cS_1 
\end{align*}
for \(i=1,\dots,n\).
Therefore, each Schubert class may be written 
as a polynomial in variables \(\cS_{i}/\cS_{i-1}\) for \(i=1,\dots,n\) with coefficients in \(\K_T(\pt)\). This proves the surjectivity in this case. 

For partial flag varieties, consider the 
injective ring homomorphism given by pulling back via the natural projection 
$p: \Fl(n) \to \Fl(r_1, \ldots, r_k;n)$. The pullbacks of Schubert classes
and of the tautological bundles are 
\[ p^*\cO^w = \cO^w \/, \quad p^*(\wedge^\ell \cS_i) = \wedge^\ell \cS_{r_i} \/, \quad 
p^*(\wedge^\ell (\cS_i/\cS_{i-1})) = \wedge^\ell (\cS_{r_i}/\cS_{r_{i-1}})  \quad \/, \]
for any $w \in S_n^{r_1, \ldots, r_k}$, any $ 1 \le i \le k$, and any $\ell$. 
On the other hand, since $w \in S_n^{r_1, \ldots, r_k}$, the Schubert classes $p^*\cO^w$
may be written as (double Grothendieck) polynomials symmetric in each block of variables 
$1-(\cS_{r_i+1}/\cS_{r_i})^{-1}, \ldots, 1-(\cS_{r_{i+1}}/\cS_{r_{i+1}-1})^{-1}$, for $0 \le i \le k$, 
i.e., in the elementary symmetric functions 
$e_{\ell}((1-(\cS_{r_i+1}/\cS_{r_i})^{-1}, \ldots, 1-(\cS_{r_{i+1}}/\cS_{r_{i+1}-1})^{-1}))$
in these sets of variables. Each such elementary symmetric function may be further expanded  
as a $\Z$-linear combination of terms of the form
\begin{equation*}\label{E:fractions} \frac{e_{s}(\cS_{r_i+1}/\cS_{r_i}, \ldots, \cS_{r_{i+1}}/\cS_{r_{i+1}-1})}{\cS_{r_i+1}/\cS_{r_i}\cdot \ldots \cdot \cS_{r_{i+1}}/\cS_{r_{i+1}-1}} = \frac{\wedge^s(\cS_{r_{i+1}}/\cS_{r_i})}{\det (\cS_{r_{i+1}}) / \det (\cS_{r_i})} \/. \end{equation*}
Observe that 
$\det \C^n = \prod_{i=1}^{k+1} {\det} (\cS_{r_i}/\cS_{r_{i-1}}) {= \det(\cS_{r_{j}}) \prod_{i=j+1}^{k+1}\det(\cS_{r_i}/\cS_{r_{i-1}})}$.
Therefore,
\[ \det( \cS_{r_{j}})^{-1} = (\det \C^n)^{-1} {\prod_{i=j+1}^{k+1}\det(\cS_{r_i}/\cS_{r_{i-1}})}, \quad j=1,\dots, k. \]
We have shown that the pullbacks of Schubert classes $p^*(\cO^w)$ are polynomials in the pullbacks of the tautological bundles and their quotients, and 
we deduce that $\Psi$ is surjective for partial flag manifolds.}

Injectivity holds because $\K_T(\pt)$ is an integral domain and both
$A$ and $\K_T(X)$ have the same rank as free modules over $\K_T(\pt)$. 
To see the latter, consider the ring 
\[ A':= \Z[T_1, \ldots, T_n][e_i(X^j), e_s(Y^j): 1 \le j \le k, 1 \le i \le r_j, 1 \le s \le 
{r_{j+1}-r_j}]/I' , \]
where \(I'\subseteq A'\) is the ideal generated by \eqn{rel_classical}.
It is classically known that $\Z[T_1, \ldots, T_n]$-algebra $A'$ is isomorphic to
the equivariant cohomology algebra 
$\HH^*_T(X)$, with $e_i(X^{(j)})$ being sent to the equivariant Chern class $c_i^T(\cS_j)$ and 
$e_i(Y^{(j)})$ to the equivariant Chern class $c_i^T(\cS_{j+1}/\cS_j)$. (This follows from 
example by realizing the partial flag variety as a tower of Grassmann bundles, then using a description 
of the cohomology of the latter as in {\cite{fulton:IT}*{Example 14.6.6}}.) In particular, $A'$ is a free $\Z[T_1, \ldots, T_n]$-algebra
of rank equal to the number of Schubert classes in $X$. Then $A= A' \otimes_{\Z[T_1, \ldots, T_n]} \K_T(\pt)$
is a free $\K_T(\pt)$-module of the same rank.
\end{proof}

\subsection{Push-forward formulae of Schur bundles} Next, we recall some results about cohomology of 
Schur bundles on Grassmannians, which we will need later. Our main reference is Kapranov's paper \cite{kapranov:Gr}. A reference
for basic definitions of Schur bundles is Weyman's book \cite{weyman}.

Recall that if $X$ is a $T$-variety, $\pi:E \to X$ is any $T$-equivariant vector bundle of rank $e$, 
and $\lambda=(\lambda_1, \ldots, \lambda_r)$ 
is a partition with at most $e$ parts, the {\em Schur bundle} $\mathfrak{S}_\lambda(E)$ is a 
$T$-equivariant vector bundle over $X$. It has the property
that if $x \in X$ is a $T$-fixed point, the fibre $(\fS_\lambda(E))_x$ 
is the $T$-module with character the Schur function $s_\lambda$. For example,
if $\lambda = (1^k)$, then $\fS_{(1^k)}(E) = \wedge^k E$, and if $\lambda = (k)$ then
$\fS_{(k)}(E)=\mathrm{Sym}^k(E)$.

Consider a $T$-variety $X$ equipped with 
a $T$-equivariant
vector bundle $\mathcal{V}$ of rank $n$. Denote by 
$\pi: \mathbb{G}(r,\mathcal{V}) \to X$ the Grassmann bundle over $X$. 
It is equipped with a tautological sequence
$0 \to \underline{\cS} \to \pi^* \mathcal{V} \to \underline{\cQ} \to 0$ over 
$\mathbb{G}(r,\mathcal{V})$. The next result follows
from \cite[Prop. 2.2]{kapranov:Gr}, see also \cite[Prop. 3.2 and Cor. 3.3]{gu2022quantum}.
(Kapranov proved this when $X=\pt$; the relative version follows immediately
using that $\pi$ is a $T$-equivariant locally trivial fibration).

\begin{prop}[Kapranov]\label{prop:relativeBWB}
For any nonempty partition {$\lambda= (\lambda_1\geq \lambda_2 \geq \ldots \geq \lambda_r \geq 0)$ such that 
$\lambda_1 \le n-r$}, there are the following isomorphisms of $T$-equivariant vector bundles:
\begin{enumerate}
    \item For all $i \ge 0$, the higher direct images, $R^i \pi_* \fS_\lambda(\underline{\cS}) = 0$;
    \item \[ R^i \pi_*\fS_\lambda(\underline{\cS}^*)) = \begin{cases} \fS_\lambda(\mathcal{V}^*) & i=0 \\ 0 & i>0 \/; \end{cases}\]
    \item \[ R^i \pi_*\fS_\lambda(\underline{\cQ}) = \begin{cases} \fS_\lambda(\mathcal{V}) & i=0 \\ 0 & i>0 \/. \end{cases}\]
\end{enumerate}
\end{prop}

\section{(Equivariant) quantum K-theory of flag varieties}
In this section, we first recall the definition of the equivariant quantum K ring of a partial flag variety. We then proceed by proving a presentation of the quantum K ring by generators and relations of this ring,
generalizing the one from \Cref{prop:classical-Whitney}; cf.~\Cref{thm:all_relations}. 
At this time, the statement holds
under the assumption that certain generalized (quantum K) Whitney relations do hold in the quantum 
K ring; cf. \Cref{conj2:lambda_y}. In \Section{incidence}, we will show that this assumption is satisfied for incidence varieties,
and in \Section{fullflag} for the complete flag varieties, the latter under an extra assumption.

Throughout this section, we continue with the notation $G= \mathrm{GL}_n$ and 
$X=\Fl(r_1,\dots,r_k;n)$.  
\subsection{Preliminaries} 
{
    An effective degree is a \(k\)-tuple of nonnegative integers \(d=(d_1,\dots,d_k)\), which is identified with \(\sum_{i=1}^{k}d_i[X_{s_{r_i}}]\in\HH_2(X,\Z)\).
}
We write \(q^d\) for \(q_1^{d_1}\dots q_k^{d_k}\), where $q=(q_1, \ldots, q_k)$ is a sequence
of quantum parameters. 

We recall the definition of the \(T\)-equivariant (small) quantum 
K theory ring $\QK_T(X)$, following \cite{givental:onwdvv,lee:QK}: 
\begin{equation*}\label{eqn:QKTfree}
\QK_T(X) = \K_T(X) \otimes_{\K_T(\pt)} \K_T(\pt)[\![q]\!] 
\end{equation*}
is a free $\K_T(\pt)[\![q]\!]$-module
with a $\K_T(\pt)[\![q]\!]$-basis given by Schubert classes.
It is equipped with a commutative, associative 
product, denoted by $\star$, and
determined by the condition
\begin{equation}\label{eqn:def}
    (\!(\sigma_1\star\sigma_2,\sigma_3)\!)=\sum_d
        q^d \egw{\sigma_1,\sigma_2,\sigma_3}{d}
\end{equation}
for all \(\sigma_1,\sigma_2,\sigma_3\in \K_T(X)\), where  
\[
        (\!(\sigma_1,\sigma_2)\!)\coloneqq \sum_{d}q^d \egw{\sigma_1,\sigma_2}{d}
\]
is the quantum \(\K\)-metric and \(\egw{\sigma_1,\dots,\sigma_n}{d}\) are \(T\)-equivariant \(\K\)-theoretic Gromov--Witten invariants. We define these invariants next.

Let \(d\in \HH_2(X,\Z)_+\) be an effective degree and 
\(\Mb_{0,n}(X,d)\) 
be the Kontsevich moduli space parametrizing \(n\)-pointed, genus \(0\), degree \(d\) stable maps to \(X\). Let 
\[
\ev_1, \ev_2, \dots, \ev_n: \Mb_{0,n}(X,d)\to X
\] 
be evaluations at the \(n\) marked points. 
Given \(\sigma_1, \sigma_2,\dots, \sigma_n\in \K_T(X)\), we define the \(T\)-equivariant \(\K\)-theoretic Gromov--Witten invariant \[
\egw{\sigma_1,\sigma_2,\dots,\sigma_n}{d}\coloneqq\chi_{\M_{0,n}(X,d)}^T(\ev_1^*\sigma_1\cdot\ev_2^*\sigma_2\cdots\ev_n^*\sigma_n),
\] 
where \(\chi_Y^T: \K_T(Y)\to \K_T(\pt)\) is the pushforward to a point. We adopt the convention that when \(d\) is not effective, the invariant \(\egw{\sigma_1,\sigma_2,\dots,\sigma_n}{d}=0\). 

{\begin{remark} For $1 \le j \le k$, declare 
$\deg q_{j} = \deg ([X_{s_{r_j}}] \cap c_1(T_X)) =  r_{j+1}- r_{j-1}$.
For a multidegree
$d = (d_{r_1}, \ldots, d_{r_k})$, set $\deg(\cO^w \otimes q^d) = \ell(w) + \sum \deg(q_i) \cdot d_{r_i}$.
Together with the topological filtration on $\K_T(X)$, this equips $\QK_T(X)$ with a structure of a filtered ring;
see \cite[\S 5.1]{buch.m:qk}. The associated graded of this ring is $\QH^*_T(X)$, the (small) $T$-equivariant quantum cohomology of $X$, a free $\HH^*_T(\pt)[q]$-algebra of the same rank as $\QK_T(X)$. \end{remark} }

\subsection{A conjectural Whitney presentation for \(\QK_T(\Fl(r_1,\dots,r_k,n))\)}
As before, $X$ denotes \(\Fl(r_1,\dots,r_k,n)\), equipped with the tautological vector bundles
\[
    0=\cS_{0}\subset\cS_{1}\subset\dots \subset \cS_{k}\subset\cS_{{k+1}}=\C^n,
\]
where $\cS_j$ has rank $r_j$. Recall from \Section{intro} the following conjecture from \cite{gu2023quantum}.  
\begin{conj}\label{conj2:lambda_y} For  \(j=1,\dots, k\), the following relations hold in $\QK_T(X)$:
        \begin{equation}\label{eqn:lambda_y_rel}
            \lambda_y(\cS_{j})\star\lambda_y(\cS_{{j+1}}/\cS_{j})=\lambda_y(\cS_{{j+1}})-y^{r_{j+1}-r_j}\frac{q_j}{1-q_j}\det(\cS_{{j+1}}/\cS_{j})\star(\lambda_y(\cS_{j})-\lambda_y(\cS_{{j-1}})).
        \end{equation} 
\end{conj} 
{\em Assuming that this conjecture holds} we will state and prove a presentation by generators and relations of the ring $\QK_T(X)$. This conjecture was proved for Grassmannians \cite{gu2022quantum}, and we will verify it later for  the incidence varieties $X=\Fl(1,n-1;n)$ and the complete flag varieties $X=\Fl(n)\coloneqq\Fl(1,\dots,n-1;n)$, the latter under an additional assumption.

We start by transforming \eqref{eqn:lambda_y_rel} into an abstract presentation. As in \Section{kflag}, let 
\[
    {X}^{(j)}=(X^{(j)}_1,\dots,X^{(j)}_{r_j})\text{ and }{Y}^{(j)}=(Y^{(j)}_1,\dots, 
Y^{(j)}_{r_{j+1}-r_j})
\] 
denote formal variables for \(j=1,\dots,k\). Let \(X^{(k+1)}\coloneqq (T_1,\dots,T_n)\) be the equivariant parameters in \(\K_T(\pt)\). Let \(e_\ell({X}^{(j)})\) and \(e_\ell({Y}^{(j)})\) be the \(\ell\)-th elementary symmetric polynomials in \({X}^{(j)}\) and \({Y}^{(j)}\), respectively. 

\begin{defn}\label{defn:Iq}
    As before, 
    \[
        S=\K_T(\pt)[e_1(X^{(j)}),\dots, e_{r_j}(X^{(j)}),e_1(Y^{(j)}),\dots,e_{r_{j+1}-r_j}(Y^{(j)}), j=1,\dots, k].
    \]
    Let \(I_q\subset S[\![q]\!]=S[\![q_1,\dots,q_k]\!]\)
be the ideal generated by the coefficients of \(y\) in 
\begin{multline*}\label{eqn:qkrel}
    \prod_{\ell=1}^{r_j}(1+y X^{(j)}_\ell)\prod_{\ell=1}^{r_{j+1}-r_j}(1+y Y^{(j)}_\ell)-\prod_{\ell=1}^{r_{j+1}}(1+y X^{(j+1)}_\ell)\\+y^{{r_{j+1}-r_j}}\frac{q_j}{1-q_j}\prod_{\ell=1}^{r_{j+1}-r_j}Y^{(j)}_\ell\left(\prod_{\ell=1}^{r_j}(1+yX^{(j)}_\ell)-\prod_{\ell=1}^{r_{j-1}}(1+yX^{(j-1)}_\ell)\right),\ j=1,\dots, k.
\end{multline*}
\end{defn}
\begin{thm}\label{thm:all_relations}
        Assume \Cref{conj2:lambda_y} holds. Then there is an isomorphism of \(\K_T(\pt)[\![q]\!]\)-algebras
    \[
        \Psi: {S[\![q]\!]}/I_q\to \QK_T(X)
    \]
    sending \(e_\ell(X^{(j)})\) to \(\wedge^\ell(\cS_{j})\) and \(e_\ell(Y^{(j)})\) to \(\wedge^\ell(\cS_{{j+1}}/\cS_{j})\).
    \end{thm}
Different presentations have recently been obtained in \cite{maeno.naito.sagaki:QKideal} for the complete flag varieties, utilizing different methods. 

Note that the algebra homomorphism $\Psi$ is well-defined by \Cref{conj2:lambda_y}. 
The proof of \Cref{thm:all_relations} follows a method developed in \cite{gu2022quantum}, 
where \Cref{thm:all_relations} was 
proved for Grassmannians. 
For notions about completions we refer to \cite[Appendix A]{gu2022quantum} and \cite[Ch. 10]{AM:intro} for further details. The key fact we utilize is the following result proved in \cite[Prop. A.3]{gu2022quantum}.

\begin{prop}[Gu--Mihalcea--Sharpe--Zou]\label{prop:Nakiso} Let $A$ be a Noetherian integral domain, 
and let $\mathfrak{a} \subset A$ be an ideal.~Assume that $A$ is complete in the $\mathfrak{a}$-adic topology.
Let $M,N$ be finitely generated 
$A$-modules. 

Assume that the $A$-module $N$, and the $A/\mathfrak{a}$-module $N/\mathfrak{a}N$, 
are both free modules 
of the same rank $p< \infty$, and
that we are given  
an $A$-module homomorphism $f: M \to N$ such that
the induced $A/\mathfrak{a}$-module map 
$\overline{f}: M/\mathfrak{a} M \to N / \mathfrak{a}N$
is an isomorphism of $A/\mathfrak{a}$-modules.

Then $f$ is an isomorphism.  
\end{prop}

{A key hypothesis needed in this proposition is 
that the claimed presentation is finitely generated as a $\K_T(\pt)[\![q]\!]$-module.
This is proved in Appendix \ref{app:fg} (see~\Cref{prop:KNak}) in a rather general 
context about modules over formal power series rings.}

\begin{proof}[Proof of \Cref{thm:all_relations}] We use \Cref{prop:Nakiso}, with 
$M$ being the conjectured presentation on the left-hand side of \Cref{thm:all_relations}, 
\[ A= \K_T(\pt)[\![q_1, \ldots, q_k]\!], \quad \mathfrak{a}= \langle q_1, \ldots, q_k \rangle\subset A \/, \quad N= \QK_T(X)\/, \quad f= \Psi \/.\]  
Since \(N\) and $N/\mathfrak{a}N = \K_T(X)$ are both free modules of rank equal to the number of Schubert classes (over $A/\mathfrak{a}$, and over $A$, respectively), the hypotheses are satisfied for $N$ and 
$N/\mathfrak{a}N$.~\Cref{prop:classical-Whitney}
implies that the induced $A/\mathfrak{a}$-module map  map $\overline{f}: M/\mathfrak{a} M \to N / \mathfrak{a}N$
is an isomorphism of $A/\mathfrak{a}$-modules. Since \(R\coloneqq\K_T(\pt)\) and 
\(S\) are Noetherian, it then follows from \Proposition{KNak} that $M$ is a finitely generated $A$-module. Then the claim 
follows from \Cref{prop:Nakiso}.
\end{proof}

\section{Curve neighborhoods and some relations in \texorpdfstring{$\QK_T(\Fl(r_1,\dots,r_k;n))$}{the equivariant quantum K ring of partial flag varieties}}
Given that \Cref{thm:all_relations} depends on the validity \Cref{conj2:lambda_y}, our goal from now on will be to prove this conjecture in some special cases, notably
for the incidence varieties $\Fl(1,n-1;n)$ and -- under an extra assumption -- for the complete flag varieties $\Fl(n)\coloneqq\Fl(1,\dots,n-1;n)$. To this aim, we recall some geometric facts about {\em curve neighborhoods}, a
notion introduced in \cite{buch.m:nbhds} (see also \cite{BCMP:qkfin}) to help study spaces of rational curves
incident to Schubert varieties.

For an effective degree $d$, we define the degree \(d\) curve neighborhood of a (Schubert) variety \(\Omega\subseteq X\) to be 
 \begin{equation}\label{def:curvenbhd}
 \Gamma_d(\Omega)\coloneqq\ev_2\left(\ev_1^{-1}(\Omega)\right) \subset X \/.
 \end{equation}
 The degree \(d\) curve neighborhood of a class $\sigma \in \K_T(X)$ is defined by 
 \[
 \sigma[d]\coloneqq {\ev_2}_*\ev_1^*(\sigma) \in \K_T(X) \/. 
 \] 
 It was proved in \cite{BCMP:qkfin}*{Proposition 3.2} that if $\Omega$ is a Schubert variety, then
 \(\ev_2: \ev_1^{-1}(\Omega)\to\Gamma_d(\Omega)\) is a locally trivial fibration  with unirational fibers. It follows that 
    \[
        \cO_\Omega[d]=\cO_{\Gamma_d(\Omega)}.
    \]
    Moreover, it was proved in \cite{BCMP:qkfin,buch.m:nbhds} that
$\Gamma_d(\Omega)$ is again a Schubert variety. 
The following proposition follows from \cite[\S 2]{BCMP:qkfin} and \cite[\S 7]{buch.m:nbhds}. 
\begin{prop}[Buch--Chaput--Mihalcea--Perrin]
For any effective degree \(d\), a Schubert variety \(\Omega\subseteq X\), and \(j=1,\dots, k\), we have 
\begin{equation}\label{eqn:q=c1}
\egw{\cO_\Omega}{d}=\chi^T_X(\cO_{\Gamma_d(\Omega)})=1,
\end{equation}
\begin{equation}\label{eqn:q=c2}
\egw{\sigma, \cO_\Omega}{d}=\chi^T_X(\sigma\cdot\cO_{\Gamma_d(\Omega)}),
\end{equation}
and if \(\Omega\) is \(B\)-stable,
\begin{equation}\label{eqn:q=c2divisor}
\egw{\cO^{s_j}, \cO_\Omega}{d}=
\begin{cases}
    1 & d_j>0\\
    \chi^T_X(\cO^{s_j}\cdot\cO_\Omega) & d_j=0
.\end{cases}
\end{equation}
\end{prop}

{For \(i=1,\dots,k\), let \(p_j: X\to Y_j\coloneqq \Gr(r_j,n)\) be the equivariant projection. Abusing notation, we denote by \(\cS_j\) the tautological bundle on \(Y_j\). Note that on \(Y_j\) we have the \(T\)-equivariant short exact sequence
\[
    0\to \det(\cS_j)\otimes\C_{-t_1-\dots-t_j}\to \cO_{Y_j}\to \cO^{s_j}\to 0.
\]
Pulling back along \(p_j\) it gives the identity
\begin{equation}\label{eqn:div-bun}
    \det(\cS_j)=\C_{t_1+\dots+t_j}(1-\cO^{s_j})
\end{equation}
in \(\K_T(X)\).

\begin{cor}
    For any effective degree \(d\), {a $B$-stable Schubert variety $\Omega \subseteq X$}, and \(j=1,\dots, k\), we have
    \begin{equation}\label{eqn:2point-det}
        \egw{\det(\cS_j), \cO_\Omega}{d}=
        \begin{cases}
            0 & d_i>0\\
            \chi^T_X(\det(\cS_j)\cdot\cO_\Omega) & d_j=0
        .\end{cases}
    \end{equation}
\end{cor}

\begin{proof}
    By \eqn{div-bun}, we have 
\begin{equation*}
    \begin{aligned}
        \egw{\det(\cS_j), \cO_\Omega}{d}=\C_{t_1+\dots+t_j}\left(\egw{\cO_\Omega}{d}-\egw{\cO^{s_j},\cO_\Omega}{d}\right),
    \end{aligned}
\end{equation*}
and \eqn{2point-det} follows from \eqn{q=c1} and \eqn{q=c2divisor}. 
\end{proof}}

    Buch and Mihalcea have the following unpublished conjecture about 3-pointed (equivariant) Chevalley-type \(\K\)-theoretic Gromov--Witten invariants. 
    \begin{conj}[Buch--Mihalcea]\label{conj:full} 
        {For any effective degree \(d\) and $\sigma, \tau\in\K_T(X)$, we have}
        \begin{equation*}\label{eqn:conj}
                \egw{\det(\cS_j),\sigma,\tau}{d}=
                \begin{cases}
                    0 & d_j>0\\\chi_X^T(\det(\cS_j)\cdot\sigma\cdot\tau[d]) & d_j=0.
                \end{cases}
            \end{equation*}
        \end{conj}
        The conjecture has been proved in several special situations. For (cominuscule) Grassmannians,
        it follows from the ``quantum = classical'' results in \cite{buch.m:qk,chaput.perrin:rationality}, and for incidence varieties \(\Fl(1,n-1;n)\) it was recently proved in \cite{xu2021quantum}; see \Theorem{conj-incidence} below. Recently, \cite{sinha2024quantum} gave an  alternative proof and generalization of the first equation in the Grassmannian case. 

In the next proposition, we give a short proof that \Conjecture{full} implies part of \Cref{conj2:lambda_y} for all partial flag varieties. In \Section{fullflag}, we prove that \Conjecture{full} implies \Cref{conj2:lambda_y} when \(X\) is the complete flag variety \(\Fl(n)\). 

\begin{prop}\label{prop:spec_rel} Assuming that \Cref{conj:full} holds, the following relations hold in \(\QK_T(X)\):
\[
 \det(\cS_i)\star\det(\cS_{i+1}/\cS_i)=(1-q_i)\det(\cS_{i+1}),\quad i=1,\dots,k-1. 
 \]
 \end{prop}
\begin{proof} Let \(d=(d_1,\dots,d_k)\) be any degree and {with $i$ fixed, define the degree $d'$ by} 
\[
    {d_j'}=
    \begin{cases}
        d_j & j\neq i\\d_i-1 & j=i.
    \end{cases}
\] We need to show that for any $\sigma \in \K_T(X)$,
\begin{equation}\label{eqn:toprove} \egw  {\det(\cS_i),  \det(\cS_{i+1}/\cS_i), \sigma}{d} =  \egw{\det(\cS_{i+1}), \sigma}{d} - \egw{ \det(\cS_{i+1}), \sigma }{d'} \/, \end{equation}
{with the convention that the term involving $d'$ is omitted if $d_i=0$.}

If \(d_i\neq0\), then by \Cref{conj:full}, the left-hand side of \eqn{toprove} is equal to \(0\).  
{By \eqn{2point-det},
\[ \begin{split} 
    \egw{\det(\cS_{i+1}), \sigma}{d} &=
    \begin{cases}
        0 & d_{i+1}>0 \\
        \chi_X^T(\det(\cS_{i+1})\cdot\sigma) & d_{i+1}=0 
    \end{cases}\\ 
    &=\egw{ \det(\cS_{i+1}), \sigma }{d'}\ , 
\end{split}
\] 
}
which implies that the right-hand side of \eqn{toprove} is also equal to \(0\). 

If \(d_i=0\), then by \Conjecture{full} and \eqn{2point-det}, the left-hand side of \eqn{toprove} equals 
\[
    \chi_X^T\left(\det(\cS_i)\cdot\det(\cS_{i+1}/\cS_i)\cdot\sigma[d]\right)=\chi_X^T(\det(\cS_{i+1})\cdot\sigma[d])=\egw{\det(\cS_{i+1}),\sigma}{d},
\]
which equals the right-hand side of \eqn{toprove}.
\end{proof}

\section{The Whitney presentation for \texorpdfstring{\(\QK_T(\Fl(1,n-1;n))\)}{the equivariant quantum K ring of incidence varieties}}\label{sec:incidence}
In this section, we prove the Whitney presentation for 
$\QK_T(X)$, where $X=\Fl(1,n-1;n)$ for $n \ge 3$ is the incidence variety. {A key fact is that \Conjecture{full} has been proved in this case \cite{xu2021quantum}}. 
We start with a section introducing some preliminary results needed in the proof.

\subsection{Preliminaries} 
Recall that $X$ is a projective manifold of dimension $2n-3$, and that
\(0\subset\cS_1\subset\cS_2\subset\cS_3=\C^n\) is the flag of tautological bundles over $X$, where the \(\cS_1, \cS_2\) are of ranks $1, n-1$ respectively. Denote by $p_1: X \to \Gr(1,n)$ and $p_2: X \to \Gr(n-1,n)$ the natural projections.

It was proved in \cite{xu2021quantum}*{Corollary 4.6} that \Conjecture{full} holds for incidence varieties.

\begin{thm}[Xu]\label{thm:conj-incidence} Let $d$ be an effective degree. 
    For any Schubert variety $\Omega$, and \(k=1,\ 2\), 
    \begin{equation}\label{eqn:q=c3}
        \egw{\cO^{s_k},\sigma,\cO_\Omega}{d}=\begin{cases}
            \chi^T_X(\sigma\cdot\cO_{\Gamma_d(\Omega)}) & d_k>0\\
            \chi^T_X(\cO^{s_k}\cdot\sigma\cdot\cO_{\Gamma_d(\Omega)}) & d_k=0
        .\end{cases}
    \end{equation}
\end{thm}

\begin{cor}\label{cor:KGWinvarints}
    For any effective degree \(d\), and any Schubert variety $\Omega$,
    \begin{equation}\label{eqn:q=c31-O1}
        \egw{\cS_1, \sigma, \cO_\Omega}{d}=
        \begin{cases}
            0 & d_1>0\\\chi^T_X(\cS_1\cdot\sigma\cdot\cO_{\Gamma_d(\Omega)}) & d_1=0
        ,\end{cases}
    \end{equation}
    \begin{equation}\label{eqn:q=c31-O2}
        \egw{\det(\cS_2), \sigma, \cO_\Omega}{d}=\begin{cases}
            0 & d_2>0\\\chi^T_X(\det(\cS_2)\cdot\sigma\cdot\cO_{\Gamma_d(\Omega)}) & d_2=0
        .\end{cases}
    \end{equation}
\end{cor}

\begin{proof}
    Since \(\cS_1=\C_{t_1}(1-\cO^{s_1})\) by \eqn{div-bun}, we have 
\begin{equation*}
    \begin{aligned}
        \egw{\cS_1, \sigma, \cO_\Omega}{d}=\C_{t_1}\left(\egw{\sigma,\cO_\Omega}{d}-\egw{\cO^{s_1},\sigma,\cO_\Omega}{d}\right),
    \end{aligned}
\end{equation*}
and then \eqn{q=c31-O1} follows from \eqn{q=c2} and \eqn{q=c3}. The proof of \eqn{q=c31-O2} is similar.
\end{proof}

\begin{remark}\label{remark:divisor_poly}
    {Let \(\QK_T^\poly(X)\subseteq\QK_T(X)\) be the subring generated by \(\cO^{s_1}\) and \(\cO^{s_2}\) over the ground ring \(\K_T(\pt)[q_1,q_2]\).
    \cite{xu2021quantum}*{Algorithm 4.16} 
 gives an algorithm that recursively expresses any Schubert class as a polynomial in \(\cO^{s_1},\ \cO^{s_2}\) with coefficients in \(\K_T(\pt)[q_1,q_2]\). Combined with \cite{xu2021quantum}*{Theorem 4.5}, this means that when expressing the product of two Schubert classes as a linear combination of Schubert classes in \(\QK_T(X)\), the coefficients are always in \(\K_T(\pt)[q_1,q_2]\). Therefore, 
\(\QK_T^\poly(X)\) can be identified with \(\K_T(X)\otimes\Z[q_1,q_2]\) as a module. Because of \eqn{div-bun}, 
\(\QK_T^\poly(X)\) is also generated by \(\cS_1,\ \det(\cS_2)\) over \(\K_T(\pt)[q_1,q_2]\). }
\end{remark}

For convenience, we restate the following curve neighborhood computations from \cite[\S 2.2.2]{xu2021quantum}.   
\begin{lemma} {The curve neighborhoods of a Schubert variety $\Omega \subseteq X$ are:}
    \begin{equation}\label{eqn:curvenbhd}
        \Gamma_d(\Omega)=\begin{cases}
            p_1^{-1}\left(p_1(\Omega)\right)& d_1=0,\ d_2>0\\
            p_2^{-1}\left(p_2(\Omega)\right)&d_1>0,\ d_2=0\\
            X& d_1>0,\ d_2>0.
        \end{cases}
    \end{equation}
\end{lemma}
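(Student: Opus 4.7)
The plan is to treat the three cases separately, using the geometric meaning of the bidegree $d=(d_1,d_2)$ relative to the two projections $p_1\colon X\to\Gr(1,n)=\P^{n-1}$ and $p_2\colon X\to\Gr(n-1,n)$. Writing points of $X$ as pairs $(L,H)$ with $L\subset H\subset\C^n$ a line in a hyperplane, a Schubert curve of class $(1,0)$ (respectively $(0,1)$) lies in a fiber of $p_2$ (respectively $p_1$); equivalently, a stable map $f\colon C\to X$ of class $(d_1,d_2)$ pushes forward to a $1$-cycle of degree $d_i$ under $p_i$, so $p_i\circ f$ is constant whenever $d_i=0$.

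In the first case $d_1=0$, $d_2>0$, this observation gives $\Gamma_d(\Omega)\subseteq p_1^{-1}(p_1(\Omega))$. Conversely, given $x\in p_1^{-1}(p_1(\Omega))$, choose $y\in\Omega$ with $p_1(y)=p_1(x)$ and join them by a line in the common fiber $p_1^{-1}(p_1(x))\cong\P^{n-2}$ (class $(0,1)$); a rational tail of class $(0,d_2-1)$ attached at an unmarked node upgrades this to a degree-$d$ stable map. The second case is symmetric via $p_2$. For the third case $d_1,d_2>0$, by monotonicity of curve neighborhoods under tail attachment it suffices to prove $\Gamma_{(1,1)}(\{y\})=X$ for a single point $y=(L_0,H_0)$. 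Given any $x=(L_1,H_1)\in X$, I construct a degree-$(1,1)$ stable map from $y$ to $x$ as follows: if $L_0\subset H_1$ or $L_1\subset H_0$, chain a $(0,1)$- and a $(1,0)$-line through the intermediate point $(L_0,H_1)$ or $(L_1,H_0)$ respectively; otherwise, since $n\ge 3$, the $2$-plane $V=L_0+L_1$ and the $(n-2)$-plane $W=H_0\cap H_1$ satisfy $V\cap W=0$, and the embedding $\P(V)\hookrightarrow X$, $L\mapsto (L,L+W)$, is an irreducible degree-$(1,1)$ rational curve passing through both $y$ and $x$.

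The principal obstacle lies in Case 3, which naturally splits into the reducible and irreducible subcases above: the careful case analysis verifying that every pair $(y,x)$ falls into at least one of them is the most delicate step, and it is precisely here that the hypothesis $n\ge 3$ is used. An alternative, more uniform route is to invoke the Weyl-group curve-neighborhood calculus of \cite{buch.m:nbhds,BCMP:qkfin}: writing $\Gamma_d(X_w)=X_{w\cdot z_d}$ as a Hecke product with a distinguished element $z_d\in S_n$, a short combinatorial computation gives $z_{(d_1,0)}$ and $z_{(0,d_2)}$ as the longest elements of the parabolic subgroups fixing the $p_2$- and $p_1$-coordinates respectively, and $z_{(d_1,d_2)}=w_0$ whenever both $d_i\ge 1$, recovering all three cases uniformly.
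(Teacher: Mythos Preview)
Your argument is correct. The paper does not actually give its own proof of this lemma; it simply records the statement ``for convenience'' and cites \cite[\S 2.2.2]{xu2021quantum}. So there is no in-paper proof to compare against.

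Your direct geometric argument is sound in all three cases. The containment $\Gamma_d(\Omega)\subseteq p_i^{-1}(p_i(\Omega))$ when $d_i=0$ follows exactly as you say from $p_i\circ f$ being constant, and the reverse containment via lines in the $\P^{n-2}$ fibers plus rational tails is the standard construction. In Case~3 your trichotomy is exhaustive (if $H_0=H_1$ or $L_0=L_1$ you land in one of the first two subcases, and otherwise $\dim V=2$, $\dim W=n-2$, and your argument that $V\cap W=0$ is clean), and the curve $L\mapsto(L,L+W)$ visibly has bidegree $(1,1)$. One small wording point: when you write ``for a single point $y$'' you really mean ``for an arbitrary point $y\in\Omega$''; the construction you give works for any $y$, so this is harmless, but you might phrase it that way to avoid the reader thinking you are invoking homogeneity.

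The alternative route you sketch via the Hecke-product formula $\Gamma_d(X_w)=X_{w\cdot z_d}$ from \cite{buch.m:nbhds,BCMP:qkfin} is indeed more uniform and is the style of argument the cited paper \cite{xu2021quantum} uses; your explicit geometric construction has the advantage of being entirely self-contained and of making the role of the two projections transparent, at the cost of the case analysis in the $(1,1)$ step.
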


\subsection{Quantum \(\K\) Whitney relations for incidence varieties}
We prove in \Cref{thm:rels} of this section two equivalent quantized versions of the Whitney relations for the 
quantum K ring of incidence varieties. 

{
We shall use the classical Whitney relations in \(\K_T(X)\):
\begin{equation}\label{eqn:kincidence}
    \lambda_y(\cS_{j})\cdot\lambda_y(\cS_{{j+1}}/\cS_{j})=\lambda_y(\cS_{{j+1}}),\quad j=1,2.
\end{equation}
Equivalently, 
\begin{equation}\label{eqn:w1}
    \wedge^{\ell}(\cS_2/\cS_1)+\cS_1\cdot\wedge^{\ell-1}(\cS_2/\cS_1)=\wedge^\ell(\cS_2),\quad \ell=1,\dots,n-1
\end{equation}
and 
\begin{equation}\label{eqn:w2}
    \wedge^\ell\cS_2+\wedge^{\ell-1}\cS_2\cdot\C^n/\cS_2=\wedge^\ell\C^n, \quad\ell=1,\dots,n.
\end{equation}

We also regard the incidence variety $X$ as a
Grassmann bundle in two ways, via the equivariant projections
\[
    p_1: X = \mathbb{G}(n-2, \C^n/\cS_1) \to Y_1=\Gr(1,n)
\]
and 
\[
    p_2: X = \mathbb{G}(1, \cS_2)\to Y_2=\Gr(n-1,n).
\] }

The next proposition will turn out to be a restatement
of the second relation in \Theorem{rels}. 

\begin{prop}\label{prop:rels1} For any $1 \le \ell \le n$, the
    following relation holds in \(\QK_T(X)\):
    \begin{equation}\label{eqn:3cases}
            \det(\cS_2)\star\left(\wedge^\ell\C^n-\wedge^\ell \cS_2\right)=\wedge^n\C^n \left(\wedge^{\ell -1}\cS_2-q_2\wedge^{\ell -1}\cS_1\right).
    \end{equation}
\end{prop}
\begin{proof}
    To prove \eqn{3cases}, it suffices to prove that
    \begin{equation}\label{eqn:GW}
        \egw{\det(\cS_2),(\wedge^\ell \C^n-\wedge^\ell  \cS_2),\cO_\Omega}{d}=\wedge^n\C^n\left(\egw{\wedge^{\ell -1}\cS_2,\cO_\Omega}{d}-\egw{\wedge^{\ell -1}\cS_1,\cO_\Omega}{d''}\right)
    \end{equation}
    for any Schubert variety \(\Omega\subseteq X\) and effective degree \(d=(d_1,d_2)\), where \(d''\coloneqq(d_1,d_2-1)\). This follows from \eqn{def} and the fact that Schubert classes form a basis for \(\K_T(X)\) over \(\K_T(\pt)\).

First assume \(d_2=0\). 
Then by \eqn{q=c31-O2}, the left-hand side of \eqn{GW} is equal to
    \begin{equation*}
        \begin{aligned}
            \chi^T_X\left(\det(\cS_2)\cdot(\wedge^\ell\C^n-\wedge^\ell \cS_2)\cdot\cO_{\Gamma_d(\Omega)}\right)&=\chi^T_X\left(\det(\cS_2)\cdot(\C^n/\cS_2)\cdot\wedge^{\ell-1}\cS_2\cdot\cO_{\Gamma_d(\Omega)}\right)\\
            &=\chi_X^T\left(\wedge^n\C^n\cdot\wedge^{\ell-1}\cS_2\cdot\cO_{\Gamma_d(\Omega)}\right)\\
            &=\wedge^n\C^n\cdot \chi^T_X\left(\wedge^{\ell-1}\cS_2\cdot\cO_{\Gamma_d(\Omega)}\right),
        \end{aligned}
    \end{equation*}
where the first equality utilizes \eqn{w2}, and the second follows again from \eqn{w2} in the special case 
$\ell =n$. By \eqn{q=c2}, this equals the right-hand side of \eqn{GW}.    

Now assume \(d_2>0\). By \eqn{q=c31-O2}, the left-hand side of \eqn{GW} is equal to \(0\). 
It suffices to show that for any $0 \le k \le n-1$, 
\begin{equation}\label{eqn:right-hand side}
\egw{\wedge^k\cS_2,\cO_\Omega}{d}=\egw{\wedge^k\cS_1,\cO_\Omega}{{d''}}.
\end{equation}
By \eqn{q=c2}, equation \eqn{right-hand side} is equivalent to 
    \begin{equation*}
        \chi_X^T\left(\wedge^k\cS_2\cdot\cO_{\Gamma_d(\Omega)}\right)=\chi_X^T\left(\wedge^k\cS_1\cdot\cO_{\Gamma_{d''}(\Omega)}\right).
    \end{equation*}
When \(k=0\) both sides are equal to $1$, and we assume from now on that \(k>0\). By \eqn{q=c2} and the projection formula,
    \begin{equation*}
        \begin{aligned}
            \chi^T_X\left(\wedge^k\cS_2\cdot\cO_{\Gamma_d(\Omega)}\right)=\chi^T_{Y_2}\left(\wedge^k\cS_2\cdot{p_2}_*\cO_{\Gamma_d(\Omega)}\right)=\chi^T_{Y_2}\left(\wedge^k\cS_2\cdot\cO_{p_2\left(\Gamma_d(\Omega)\right)}\right).
        \end{aligned}
    \end{equation*}
    When \(d_1>0\), we have \(\Gamma_d(\Omega)=X\) by \eqn{curvenbhd} and \(p_2\left(\Gamma_d(\Omega)\right)=Y_2\). Therefore, 
    \begin{equation*}
        \chi^T_{Y_2}\left(\wedge^k\cS_2\cdot\cO_{p_2\left(\Gamma_d(\Omega)\right)}\right)=\chi^T_{Y_2}\left(\wedge^k\cS_2\right)=0
    \end{equation*} 
    by \Cref{prop:relativeBWB}.
    By \eqn{2point-det}, we have
    \(
        \chi_X^T\left(\wedge^k\cS_1\cdot\cO_{\Gamma_{d''}(\Omega)}\right)=0,
    \)
    proving \eqn{right-hand side} in this case. When \(d_1=0\), by \eqn{curvenbhd}, \(\Gamma_d(\Omega)=p_1^{-1}\left(p_1(\Omega)\right)\), and therefore 
    \begin{equation*}
        p_1(\Omega)\supseteq p_1\left(\Gamma_d(\Omega)\right)\supseteq p_1\left(\Gamma_{d''}(\Omega)\right)\supseteq p_1(\Omega), 
    \end{equation*}
    forcing all of them to be equal. By the projection formula
   and \eqn{w1},
    \begin{equation*}
        \begin{aligned}
            \chi^T_X\left(\wedge^k\cS_2\cdot\cO_{\Gamma_d(\Omega)}\right)=&\chi^T_X\left(\wedge^k\cS_2\cdot p_1^*\cO_{p_1(\Omega)}\right)=\chi_{Y_1}^T\left({p_1}_*(\wedge^k\cS_2)\cdot\cO_{p_1(\Omega)}\right)\\=&\chi_{Y_1}^T\left({p_1}_*\left(\cS_1\cdot\wedge^{k-1}(\cS_2/\cS_1)+\wedge^k(\cS_2/\cS_1)\right)\cdot\cO_{p_1(\Omega)}\right)\\=&\chi_{Y_1}^T\left(\left(\cS_1\cdot{p_1}_*\left(\wedge^{k-1}(\cS_2/\cS_1)\right)+{p_1}_*\left(\wedge^k(\cS_2/\cS_1)\right)\right)\cdot\cO_{p_1(\Omega)}\right)\\=&
            \begin{cases}
                \chi_{Y_1}^T\left(\cS_1\cdot\cO_{p_1(\Omega)}\right) & k=1\\
                0 & k>1,
            \end{cases}
        \end{aligned}
    \end{equation*}
 where the last equality follows from \Cref{prop:relativeBWB} because $\cS_2/\cS_1$ is the tautological
 subbundle of the Grassmann bundle $X=\mathbb{G}(n-2, \C^n/\cS_1) \to Y_1$. The claim follows from combining this
 and    
 \begin{equation*}
 \begin{split}
        \chi_X^T\left(\wedge^k\cS_1\cdot\cO_{\Gamma_{d''}(\Omega)}\right)& =\chi_{Y_1}^T\left(\wedge^k\cS_1\cdot\cO_{p_1(\Gamma_{d''}(\Omega))}\right)\\ & =\chi_{Y_1}^T\left(\wedge^k\cS_1\cdot\cO_{p_1(\Omega)}\right)\\ & =\begin{cases}
            \chi_{Y_1}^T\left(\cS_1\cdot\cO_{p_1(\Omega)}\right) & k=1\\
            0 & k>1.
        \end{cases}
  \end{split}
    \end{equation*}
\end{proof}

{The following theorem ``quantizes'' the classical \(\K\)-theoretic Whitney relations \eqn{kincidence}. }
\begin{thm}\label{thm:rels}
        The following relations hold in \(\QK_T(X)\):
        \begin{equation}\label{eqn:rel1}
            \lambda_y(\cS_1)\star\lambda_y(\cS_2/\cS_1)=\lambda_y(\cS_2)-q_1 y^{n-1}\det(\cS_2),
        \end{equation}
        \begin{equation}\label{eqn:rel2}
                \la_y(\cS_2)\star\la_y(\C^n/\cS_2)=\la_y(\C^n)-q_2[\la_y(\C^n)-\la_y(\cS_2)-(\la_y(\C^n/\cS_2)-1)\star \la_y(\cS_1)].
        \end{equation}
{As a consequence, \Cref{conj2:lambda_y} holds for incidence varieties.}        
    \end{thm}
    
    \begin{proof}
        The proof of \eqn{rel1} is similar to that of \Cref{prop:rels1},
        but we provide the details for completeness. To prove \eqn{rel1}, it suffices to prove that 
        \begin{equation}\label{eqn:GW1}
            \egw{\la_y(\cS_1),\la_y(\cS_2/\cS_1),\cO_\Omega}{d}=\egw{\la_y(\cS_2),\cO_\Omega}{d}-y^{n-1}\egw{\det(\cS_2),\cO_\Omega}{d'}
        \end{equation}
for any Schubert variety \(\Omega\subseteq X\) and effective degree \(d=(d_1,d_2)\),
where \(d'\coloneqq (d_1-1,d_2)\). 
    
When \(d_1=0\), equation \eqn{GW1} follows from \eqn{q=c2} and \eqn{q=c31-O1}, because 
        \[
            \chi_X^T\left(\la_y(\cS_1)\cdot\la_y(\cS_2/\cS_1)\cdot\cO_{\Gamma_d(\Omega)}\right)=\chi_X^T\left(\la_y(\cS_2)\cdot\cO_{\Gamma_d(\Omega)}\right)\/
        \] 
        by \eqn{kincidence}. 
        Now assume that \(d_1>0\). By \eqn{q=c2} and \eqn{q=c31-O1}, the left-hand side of \eqn{GW1} is equal to
        \begin{equation*}
            \chi_X^T\left(\la_y(\cS_2/\cS_1)\cdot\cO_{\Gamma_d(\Omega)}\right),
        \end{equation*}
        and the right-hand side of \eqn{GW1} is equal to
        \begin{equation*}
            \chi_X^T\left(\la_y(\cS_2)\cdot\cO_{\Gamma_d(\Omega)}\right)-y^{n-1}\chi_X^T\left(\det(\cS_2)\cdot\cO_{\Gamma_{d'}(\Omega)}\right).
        \end{equation*}
Note that \(\Gamma_d(\Omega)=p_2^{-1}(p_2\left(\Gamma_d(\Omega)\right))\) by \eqn{curvenbhd}.
By the projection formula, 
        \begin{equation*}
            \begin{aligned}
                \chi_X^T\left(\la_y(\cS_2/\cS_1)\cdot\cO_{\Gamma_d(\Omega)}\right)&=\chi_X^T\left(\la_y(\cS_2/\cS_1)\cdot 
                p_2^*\cO_{p_2\left(\Gamma_d(\Omega)\right)}\right)\\
                &=\chi_{Y_2}^T\left({p_2}_*\left(\la_y(\cS_2/\cS_1)\right)\cdot\cO_{p_2\left(\Gamma_d(\Omega)\right)}\right)\\
                &=\chi_{Y_2}^T\left(\la_y(\cS_2)_{\leq n-2}\cdot\cO_{p_2\left(\Gamma_d(\Omega)\right)}\right)\\
                &=\chi_{Y_2}^T\left(\la_y(\cS_2)_{\leq n-2}\cdot{p_2}_*\cO_{\Gamma_d(\Omega)}\right)\\
                &=\chi_X^T\left(\la_y(\cS_2)_{\leq n-2}\cdot\cO_{\Gamma_d(\Omega)}\right),
            \end{aligned} 
        \end{equation*}
 where 
 \[
    \la_y(\cS_2)_{\leq n-2}=1+y\cS_2+\dots+y^{n-2}\wedge^{n-2}\cS_2
\]
and the third equality follows from \Cref{prop:relativeBWB}.
Therefore, it suffices to show that 
        \begin{equation}\label{eqn:d1>0}
            \chi_X^T\left(\det(\cS_2)\cdot\cO_{\Gamma_d(\Omega)}\right)=\chi_X^T\left(\det(\cS_2)\cdot\cO_{\Gamma_{d'}(\Omega)}\right).
        \end{equation} 
 When \(d_2=0\), by \eqn{curvenbhd}, we have \(\Gamma_d(\Omega)=p_2^{-1}(p_2(\Omega))\), and as in the proof of 
 \Cref{prop:rels1}, we have    
        \begin{equation*}
            p_2(\Omega)\supseteq p_2\left(\Gamma_d(\Omega)\right)\supseteq p_2\left(\Gamma_{d'}(\Omega)\right)\supseteq p_2(\Omega), 
        \end{equation*}
        forcing all of them to be equal. 
        By the projection formula, 
        \begin{equation*}
            \begin{aligned}
                \chi_X^T\left(\det(\cS_2)\cdot\cO_{\Gamma_d(\Omega)}\right)=&\chi_{Y_2}^T\left(\det(\cS_2)\cdot{p_2}_*\cO_{\Gamma_d(\Omega)}\right)\\
                =&\chi_{Y_2}^T\left(\det(\cS_2)\cdot\cO_{{p_2}(\Gamma_d(\Omega))}\right).
            \end{aligned}
        \end{equation*} 
        Similarly,
        \begin{equation*}
            \chi_X^T\left(\det(\cS_2)\cdot\cO_{\Gamma_{d'}(\Omega)}\right)=\chi_{Y_2}^T\left(\det(\cS_2)\cdot\cO_{{p_2}(\Gamma_{d'}(\Omega))}\right).
        \end{equation*}
Equation \eqn{d1>0} follows because \({p_2}(\Gamma_d(\Omega))={p_2}(\Gamma_{d'}(\Omega))\). When \(d_2>0\), by \eqn{curvenbhd}, \(\Gamma_d(\Omega)=X\) and \(\Gamma_{d'}(\Omega)=p_1^{-1}(p_1(\Gamma_{d'}(\Omega)))\). Therefore, by the projection formula and \Cref{prop:relativeBWB},
        \begin{equation*}
\chi_X^T\left(\det(\cS_2)\cdot\cO_{\Gamma_d(\Omega)}\right)=\chi_X^T\left(\det(\cS_2)\cdot\cO_X\right)=\chi_X^T\left(\det(\cS_2)\right)=\chi_{Y_2}^T\left(\det(\cS_2)\right)=0,
        \end{equation*}
and
        \begin{equation*}
            \begin{aligned}
                \chi_X^T\left(\det(\cS_2)\cdot\cO_{\Gamma_{d'}(\Omega)}\right)=&\chi_X^T\left(\wedge^{n-2}(\cS_2/\cS_1)\cdot \cS_1\cdot\cO_{\Gamma_{d'}(\Omega)}\right)\\
                =&\chi_X^T\left(\wedge^{n-2}(\cS_2/\cS_1)\cdot p_1^*\left(\cS_1\cdot\cO_{p_1(\Gamma_{d'}(\Omega))}\right)\right)\\
                =&\chi_{Y_1}^T\left({p_1}_*\left(\wedge^{n-2}(\cS_2/\cS_1)\right)\cdot \cS_1\cdot\cO_{p_1(\Gamma_{d'}(\Omega))}\right)=0,
            \end{aligned}
        \end{equation*}
from which \eqn{d1>0} follows.

        We derive \eqn{rel2} from \eqn{3cases}. First, note that
        \begin{equation}\label{eqn:sum}
                \la_y(\cS_2)\star\la_y(\C^n/\cS_2)=\sum_{\ell=0}^n y^\ell(\wedge^\ell \cS_2+\wedge^{\ell-1}\cS_2\star(\C^n/\cS_2)).
        \end{equation} 
 Equation \eqn{3cases} applied to $\ell=1$ gives that 
 $(\C^n/\cS_2)\star\det(\cS_2)={\wedge^n \C^n}(1-q_2)$.
 Then by associativity 
        \begin{equation*}
                \begin{aligned}
                        \frac{(\C^n/\cS_2)\star\det(\cS_2)\star(\wedge^\ell\C^n-\wedge^\ell \cS_2)}{\wedge^n\C^n}&=(1-q_2)(\wedge^\ell\C^n-\wedge^\ell \cS_2)\\
                        &=(\C^n/\cS_2)\star(\wedge^{\ell-1}\cS_2-q_2\wedge^{\ell-1}\cS_1).
                \end{aligned}      
        \end{equation*}
After rearranging the terms in the last equality, we obtain 
        \begin{equation}\label{eqn:subs}
                \begin{aligned}
                        \wedge^\ell \cS_2+\wedge^{\ell-1}\cS_2\star(\C^n/\cS_2)=\wedge^\ell\C^n-q_2\left(\wedge^\ell\C^n-\wedge^\ell \cS_2-(\C^n/\cS_2)\star\wedge^{\ell-1}\cS_1\right).
                \end{aligned}
        \end{equation}
        Plugging \eqn{subs} into \eqn{sum}, we obtain \eqn{rel2}:
        \begin{equation*}
                \la_y(\cS_2)\star\la_y(\C^n/\cS_2)=\la_y(\C^n)-q_2[\la_y(\C^n)-\la_y(\cS_2)-(\la_y(\C^n/\cS_2)-1)\star \la_y(\cS_1)].
        \end{equation*}
Finally, the proof ends by observing that the relations just proved are equivalent to the relations from
\Cref{conj2:lambda_y}. Indeed, the equivalence of \eqn{rel1} and the first relation (\(j=1\))
from \Cref{conj2:lambda_y} uses that 
\[ \det \cS_1 \star \det \cS_2/\cS_1 = (1-q_1) \det \cS_2 \/, \]
and is a special case of both \eqn{rel1} and
\Cref{conj2:lambda_y}.
The equivalence of \eqn{rel2} and the second relation (\(j=2\))
from \Cref{conj2:lambda_y} follows by multiplying by $1-q_2$ and rearranging terms.
This finishes the proof.
\end{proof}

\subsection{The QK Whitney presentation}\label{sec:presentation}
The goal of this section 
is to prove, in \Cref{thm:incidence} and \Cref{cor:poly} respectively,
the Whitney presentation for $\QK_T(X)$ and a ``localized'' version, 
which holds over a subring of the power series ring. 

We start by recalling the Whitney relations in the case of incidence varieties. Recall that
\[ S = \K_T(\pt)[X^{(1)}_1,e_1(X^{(2)}),\dots,e_{{n-1}}(X^{(2)}),e_1(Y^{(1)}),\dots,e_{n-2}(Y^{(1)}),Y^{(2)}_1] \/,\] 
and let $I_q \subset S[\![q]\!]$ be the ideal generated by the coefficients of \(y\) in
\begin{equation}\label{eqn:r-1}
    (1+yX^{(1)}_1)\prod_{\ell=1}^{n-2}(1+yY^{(1)}_\ell)-\prod_{\ell=1}^{n-1}(1+yX^{(2)}_\ell)+y^{n-1}\frac{q_1}{1-q_1}(\prod_{\ell=1}^{n-2}Y^{(1)}_\ell)X^{(1)}_1
\end{equation}
and 
\begin{equation}\label{eqn:r-2}
   \left(\prod_{\ell=1}^{n-1}(1+yX^{(2)}_\ell)\right)(1+yY^{(2)}_1)-\prod_{\ell=1}^{n}(1+yT_\ell)+y\frac{q_2}{1-q_2}Y^{(2)}_1\left(\prod_{\ell=1}^{n-1}(1+yX^{(2)}_\ell)-(1+yX^{(1)}_1)\right).
\end{equation}

\begin{thm}\label{thm:incidence}
    There is an isomorphism of \(\K_T(\pt)[\![q_1,q_2]\!]\)-algebras 
    {\[
         \Psi:S[\![q]\!]/I_q\to \QK_T(X)
     \]}
     sending \(X^{(1)}_1\) to \(\cS_1\), \(e_\ell(Y^{(1)})\) to \(\wedge^\ell(\cS_2/\cS_1)\), \(e_\ell(X^{(2)})\) to \(\wedge^\ell(\cS_2)\), and \(Y^{(2)}_1\) to \(\C^n/\cS_2\).
 \end{thm}
 \begin{proof}
    This follows from \Theorem{all_relations} and \Theorem{rels}.
 \end{proof}

 Define the following polynomial and localized versions of submodules of \(\QK_T(X)\): 
\[ \QK_T^\poly(X)\coloneqq\K_T(X) \otimes_{\K_T(\pt)}\K_T(\pt)[q_1,q_2] \subset \QK_T^{\rm{loc}}(X)\coloneqq \K_T(X) \otimes_{\K_T(\pt)}\K_T(\pt)[q_1,q_2]_{1+\langle q_1,q_2\rangle} 
\/.\]
Since the product of two Schubert classes in $\QK_T(X)$ involves only coefficients in 
$\K_T(\pt)[q_1, q_2]$ (cf.~\Cref{remark:divisor_poly}, see also \cite{anderson2022finite} and \cite{kato:loop}*{Cor.~4.16}), 
it follows that both
$\QK_T^\poly(X)$ and $\QK_T^{\rm{loc}}(X)$ are in fact subalgebras of $\QK_T(X)$ over the 
appropriate ground rings.

Consider the polynomial ring $S[q]$ and let $I_q^{\poly}\subset S[q]$
be the ideal generated by the coefficients of \(y\) in 
\begin{equation*}\label{eqn:r1}
    (1+y X^{(1)}_1)\prod_{\ell=1}^{n-2}(1+yY^{(1)}_\ell)-\prod_{\ell=1}^{n-1}(1+yX^{(2)}_\ell)+q_1y^{n-1}\prod_{\ell=1}^{n-1}X^{(2)}_\ell
\end{equation*}
and 
\begin{multline*}\label{eqn:r2}
    \left(\prod_{\ell=1}^{n-1}(1+yX^{(2)}_\ell)\right)(1+yY^{(2)}_1)-\prod_{\ell=1}^{n}(1+yT_\ell)\\+q_2\left(\prod_{\ell=1}^{n}(1+yT_\ell)-\prod_{\ell=1}^{n-1}(1+yX^{(2)}_\ell)-yY^{(2)}_1(1+yX^{(1)}_1)\right).
\end{multline*}
These relations formalize the relations from \Cref{thm:rels}.
By \cite[Prop.~10.13]{AM:intro}, the completion of the $S[q]$-module $S[q]/I_q^\poly$ 
along the ideal $\langle q\rangle\coloneqq\langle q_1, q_2 \rangle$ is
\[ S[q]/I_q^\poly \otimes_{S[q]} S[\![q]\!] = S[\![q]\!]/I_q \/, \]
where we have used that $I_q^\poly S[\![q]\!] = I_q$, as shown in the end of the proof of \Theorem{rels}.

Set $S[q]_\loc:=S[q]_{1+\langle q\rangle}$, and define the ideal
$I_q^\loc \coloneqq I_q^\poly S[q]_\loc$
generated by \eqn{r-1} and \eqn{r-2} in the localized ring 
$S[q]_\loc$. Since
\[ S[q]_\loc/I_q^\loc = (S[q]/I_q^\poly)_{1+\langle q\rangle} \/, \]
it follows from \cite[p.~110]{AM:intro} that $S[q]_\loc/I_q^\loc$ is a subring
of $S[\![q]\!]/I_q$.
\begin{cor}\label{cor:poly}
    The isomorphism \(\Psi\) restricts to an isomorphism 
    of \(S[q]_\loc\)-algebras 
    \begin{equation*}
        \Psi_{\rm{loc}}: S[q]_\loc/I_q^\loc\to \QK_T^{\rm{loc}}(X).
    \end{equation*} 
\end{cor}

\begin{proof} 
    The injectivity follows from the injetivity of \(\Psi\). The surjectivity
follows from \Cref{remark:divisor_poly}, which implies that $\QK_T^{\rm{loc}}(X)$ is generated over
$S[q]_\loc$
by the (line) bundles $\cS_1$ and $\det \cS_2$. 
\end{proof} 

\begin{remark}\label{rmk:poly} The attentive reader may have noticed that, 
    by \Cref{thm:rels}, there is also a natural ring homomorphism
    \[ \Psi_\poly: S[q]/I_q^\poly \to \QK_T^\poly(X) \]
    defined by the same formula as $\Psi$. Furthermore, \Cref{remark:divisor_poly}
    implies that $\Psi_\poly$ is surjective. {However, $\Psi_\poly$ is not injective, see \Example{fl3} below. 
    The kernel agrees with that of the localization map \(S[q]/I_q^\poly\to S[q]_\loc/I_q^\loc\) because of the commutative diagram 
    \[
        \begin{CD}S[q]/I_q^\poly @>\Psi_\poly>> \QK_T^\poly(X)\\ @VVV @VVV\\S[q]_\loc/I_q^\loc @>\Psi_{\rm{loc}}>>\QK_T^{\rm{loc}}(X)
        \end{CD}
    \]
    and the injectivity of \(\QK_T^\poly(X)\to\QK_T^{\rm{loc}}(X)\). }
\end{remark}

{
   \begin{example}\label{example:fl3}
        Let \(X=\Fl(1,2;3)\), then 
        \[I_q^\poly\subset S[q]=\K_T(\pt)[X_1^{(1)},e_1(X^{(2)}),e_2(X^{(2)}),Y^{(1)}_1,Y^{(2)}_1][q]\] is the ideal generated by the relations
        \begin{equation*}
            X_1^{(1)}+Y_1^{(1)}-e_1(X^{(2)}),
        \end{equation*}
        \begin{equation*}
            X_1^{(1)}Y_1^{(1)}-(1-q_1)e_2(X^{(2)}),
        \end{equation*}
        \begin{equation*}
            (1-q_2)\left(e_1(X^{(2)})+Y_1^{(2)}-e_1(T)\right),
        \end{equation*}
        \begin{equation*}
            \left(e_1(X^{(2)})-q_2X_1^{(1)}\right)Y_1^{(2)}-(1-q_2)\left(e_2(T)-e_2(X^{(2)})\right),
        \end{equation*}
        \begin{equation*}
            e_2(X^{(2)})Y_1^{(2)}-(1-q_2)e_3(T).
        \end{equation*}
        {Focusing on the third relation}, one easily checks that \(e_1(X^{(2)})+Y_1^{(2)}-e_1(T)\) is a nonzero element in the kernel of both \(\Psi_\poly\) and the localization map \(S[q]/I_q^\poly\to S[q]_\loc/I_q^\loc\).
    \end{example}
    }

\subsection{A physics derivation of the QK Whitney presentation}\label{sec:physics}
{Continuing the circle of ideas first used for Grassmannians in \cite{Gu:2020zpg,gu2022quantum}, we derived from physics in \cite{gu2023quantum} a ``{Coulomb branch presentation}''
of the quantum K ring of any partial flag variety. This presentation was obtained as the
critical locus of a certain \emph{one-loop twisted superpotential} $\mathcal{W}$
which arises in the study of $3$d gauged linear sigma models (GLSM). In this section, we recall the derivation of the Coulomb branch presentation for incidence varieties and show that it is essentially the same as the Whitney presentation.  The same ideas apply to any partial flag variety; we review this briefly in \Remark{general}.}

As before, consider the variables $X^{(1)}_1, X^{(2)}_1, \ldots, X^{(2)}_{n-1}$.
The twisted superpotential $\mathcal{W}$ is defined as
\begin{eqnarray*}
\mathcal{W} & = &
\frac{1}{2} (n-2) \sum_{a=1}^{n-1} \left( \ln X^{(2)}_a \right)^2
\: - \:
\sum_{1 \le a < b \le n-1} \left(\ln X^{(2)}_{a}\right) 
\left(\ln X^{(2)}_{b}\right)
\nonumber \\
& &
\: + \:
\left( \ln q_1 \right) \left( \ln X^{(1)}_1 \right)
\: + \:
\left( \ln \left( (-1)^{n-2} q_2 \right) \right) \sum_{a=1}^{n-1}
\left( \ln X^{(2)}_a \right)
\nonumber \\
& & 
\: + \: 
\sum_{a=1}^{n-1} {\rm Li}_2 \left( X^{(1)}_1 / X^{(2)}_a \right)
\: + \:
\sum_{a=1}^{n-1} \sum_{i=1}^n {\rm Li}_2 \left( X^{(2)}_a / T_i \right)
\end{eqnarray*}
on the subset
of the torus $X^{(1)}_1 \cdot X^{(2)}_1 \cdot \ldots \cdot X^{(2)}_{n-1} \neq 0$
{where $X^{(2)}_i \neq X^{(2)}_j$ for $i \neq j$}. Here, $\mathrm{Li}_2$ is the dilogarithm function, satisfying
$x \frac{\partial}{\partial x} \mathrm{Li}_2(x) = - \ln (1-x)$. 

As a side remark, the more general superpotential associated to any GLSM also depends on
certain Chern-Simons levels. Here we have already chosen the levels giving the 
quantum K theory ring. 

The (unsymmetrized) Coulomb branch relations are given by 
\[ \mathrm{exp} \left(\frac{\partial \mathcal{W}}{\partial \ln (X^{(1)}_1)}\right) = 1 \/; \quad 
\mathrm{exp} \left(\frac{\partial \mathcal{W}}{\partial \ln (X^{(2)}_i)}\right) = 1\/, \quad 1 \le i \le n-1 \/. \]
Calculating derivatives one obtains 

\begin{equation}\label{eqn:deriv1}
q_1 \: = \:  \prod_{i=1}^{n-1} \left(
1 - \frac{ X_1^{(1)} }{ X_i^{(2)} } \right),
\end{equation}
\begin{equation} \label{eqn:deriv2}
(-1)^{n-2} q_2 
\left( 1- \frac{X_1^{(1)}}{X_k^{(2)}} \right)
\: = \:
\left( \prod_{i=1}^{n-1} \frac{X_i^{(2)}}{X_k^{(2)}} \right)
\prod_{j = 1}^n \left( 1 - \frac{ X_k^{(2)} }{  T_{j} } \right),\ k= 1, \dots, n-1.
\end{equation}

Equations \eqn{deriv1} and \eqn{deriv2} are a special case of the Bethe Ansatz equations \cite{koroteev}*{Equation (27)}
and the gauge/Bethe correspondence of Nekrasov and Shatashvili \cite{nekrasov.shatashvili}.
Investigating these connections is of high interest, but beyond the scope of the current paper.

We observe that \(\xi=X_1^{(1)}\) is a solution to the equation 
\begin{equation*}
    X_1^{(1)}\sum_{l=0}^{n-1}(-1)^\ell e_\ell(X^{(2)})\xi^{n-1-\ell}+(-1)^{n-2}q_1 e_{n-1}(X^{(2)})\xi=0,
\end{equation*}
and we denote the remaining \(n-2\) solutions by \(\overline{X}^{(1)}=(\overline{X}_1^{(1)},\dots,\overline{X}_{n-2}^{(1)})\); similarly, we have that \(\xi=X^{(2)}_1,\dots,X^{(2)}_{n-1}\) are solutions to 
\begin{equation*}
    e_{n-1}(X^{(2)})\sum_{\ell=0}^{n}(-1)^\ell e_\ell(T)\xi^{n-\ell}+q_2 e_n(T)\left(-\xi^{n-1}+X_1^{(1)}\xi^{n-2}\right)=0,
\end{equation*}
and we denote the remaining solution by \(\overline{X}^{(2)}_1\). Then, using Vieta's formulae on this expression regarded as a polynomial in $\xi$, we deduce equations 
\begin{equation*}  
e_\ell(\overline{X}^{(1)})+X_1^{(1)} e_{\ell-1}(\overline{X}^{(1)})-e_{\ell}( X^{(2)} ) =\begin{cases} 0 & \ell = 1, \dots, n-3, n-1 \\
q_1 e_{n-1}( X^{(2)} )/X^{(1)}_1  &  \ell = n-2 \/,
\end{cases}
\end{equation*}
and
\begin{equation*}  
e_\ell(X^{(2)})+e_{\ell-1}(X^{(2)})\overline{X}^{(2)}_1-
e_{\ell}(T) = 
\begin{cases}
q_2 e_{n}(T)/e_{n-1}(X^{(2)}) & \ell=1\\
q_2 e_n(T)X^{(1)}_1/e_{n-1}(X^{(2)}) & \ell=2\\
0 & \ell=3,\dots,n \/,    
\end{cases}
\end{equation*}
which simplify to the equations
\begin{equation}  \label{eqn:inc:final:1}
    e_\ell(\overline{X}^{(1)})+X_1^{(1)} e_{\ell-1}(\overline{X}^{(1)})-e_{\ell}( X^{(2)} ) = \: 
    \left\{ \begin{array}{cl}
    0 & \ell = 1, \dots, n-3, n-1\\q_1 e_{n-2}( \overline{X}^{(1)} )  &  \ell = n-2
    \end{array} \right.
    \end{equation}
    and
    \begin{equation}  \label{eqn:inc:final:2}
    e_\ell(X^{(2)})+e_{\ell-1}(X^{(2)})\overline{X}^{(2)}_1-
    e_{\ell}(T) = 
    \begin{cases}
    q_2 \overline{X}_1^{(2)} & \ell=1\\
    q_2 X^{(1)}_1 \overline{X}_1^{(2)} & \ell=2\\
    0 & \ell=3,\dots,n \/.
    \end{cases}
\end{equation}
Let 
\[
    \overline{\QK}_T(X)\coloneqq \K_T(\pt)[\![q_1,q_2]\!][X^{(1)}_1, e_i(X^{(2)}), e_j(\overline{X}^{(1)}),\overline{X}^{(2)}_1; 1 \le i \le n-1,\ 1 \le j \le n-2]/J_q
\]
be the ``Coulomb branch'' ring, where \(J_q\) is the ideal generated by the relations given by \eqn{inc:final:1} and \eqn{inc:final:2}.
\begin{prop}\label{prop:iso-Coulomb-Whitney}
    There is an isomorphism of \(\K_T(\pt)[\![q_1,q_2]\!]\)-algebras \(\Phi: \overline{\QK}_T(X) \to \QK_T(X)\) given by 
    \begin{equation}\label{eqn:Phi1}
        X^{(1)}_1\mapsto\cS_1;\quad e_k(X^{(2)})\mapsto\wedge^k(\cS_2),\ 1\leq k\leq n-1;
    \end{equation}
    and 
    \begin{equation}\label{eqn:Phi2}
        e_\ell(\overline{X}^{(1)})\mapsto
        \begin{cases}
            \wedge^{\ell}( \cS_2/\cS_1 ) & 1\leq\ell < n-2\\
            \det(\cS_2/\cS_1)/(1-q_1) & \ell=n-2 
        \end{cases};
        \quad \overline{X}^{(2)}_1\mapsto({\mathbb C}^n/\cS_2)/(1-q_2).
    \end{equation}
\end{prop}
\begin{proof}
    Let the morphism of \(\K_T(\pt)[\![q_1,q_2]\!]\)-algebras 
    \[
        \widetilde{\Phi}: \K_T(\pt)[\![q_1,q_2]\!][X^{(1)}_1, e_i(X^{(2)}), e_j(\overline{X}^{(1)}),\overline{X}^{(2)}_1; 1 \le i \le n-1,\ 1 \le j \le n-2]\to\QK_T(X)
    \]
    be defined by \eqn{Phi1} and \eqn{Phi2}. Note that \(\widetilde{\Phi}\) is surjective. The morphism \(\Phi\) sends equations \eqn{inc:final:1} and \eqn{inc:final:2} to equations
    \begin{equation}\label{eqn:increl1}
        \wedge^\ell(\cS_2/\cS_1)+\cS_1\star\wedge^{l-1}(\cS_2/\cS_1)=
        \begin{cases}
            \wedge^\ell(\cS_2) & \ell=1,\dots,n-2\\
            (1-q_1)\wedge^\ell(\cS_2) & \ell=n-1
        \end{cases}
    \end{equation}
    and 
    \begin{multline}\label{eqn:increl2}
        \wedge^\ell(\cS_2)+\wedge^{l-1}(\cS_2)\star(\C^n/\cS_2)\\=\wedge^\ell(\C^n)-\frac{q_2}{1-q_2}\wedge^{\ell-1}(\C^n/\cS_2)\star(\wedge^{\ell-1}(\cS_2)-\wedge^{\ell-1}(\cS_1))\quad\ell=1,\dots,n.
    \end{multline}
    Note that \eqn{increl1} and \eqn{increl2} are equivalent to the relations in \Theorem{rels}. Therefore, \(\widetilde{\Phi}\) induces the desired isomorphism \(\Phi:\overline{\QK}_T(X) \to \QK_T(X)\).
\end{proof}

\begin{remark}\label{remark:general}
    The construction of \(\overline{\QK}_T(X)\) can be generalized to any general partial flag variety \(X=\Fl(r_1,\dots,r_k;n)\). Following an analogous derivation in \cite{gu2023quantum}*{\S 4}, starting with the twisted superpotential 
    \begin{equation}
        \mathcal{W} = \sum_{j=1}^k \mathcal{W}_j,
    \end{equation}
    where
    \begin{align}
        \mathcal{W}_j ={}& \frac{r_j}{2} \sum_{a_j=1}^{r_j} \left(\ln X^{(j)}_{a_j}\right)^2 - \frac{1}{2}\left(\sum_{a_j=1}^{r_j} \ln X^{(j)}_{a_j}\right)^2 + \ln(-1)^{r_j - 1} q_j \sum_{a_j = 1}^{r_j} \ln X^{(j)}_{a_j}\cr
        &+ \sum_{a_j=1}^{r_j} \sum_{a_{j+1}=1}^{r_{j+1}} \mathrm{Li}_2\left(X^{(j)}_{a_j}/X^{(j+1)}_{a_{j+1}}\right),
    \end{align}
    and the (unsymmetrized) Coulomb branch equations
    \begin{equation}
        \exp\left(\frac{\partial \,\mathcal{W}}{\partial \ln X^{(j)}_{a_j}}\right) = 1, \quad j =1, 2, \dots, k,
    \end{equation}
    we have that in this case 
    \[
        \overline{\QK}_T(X)=\K_T(\pt)[\![q_1,\dots,q_k]\!][e_1(X^{(j)}),\dots,e_{r_j}(X^{(j)}),e_1(\overline{X}^{(j)}),\dots,e_{r_{j+1}-r_j}(\overline{X}^{(j)}),j=1,\dots,k]/J_q,
    \]
    where \(J_q\) is the ideal generated by
    \begin{equation}  \label{eq:keyreln}
        \sum_{r=0}^{r_{j+1}-r_j} e_{\ell-r}(X^{(j)}) 
        e_r(\overline{X}^{(j)})
        \: = \:
        e_{\ell}(X^{(j+1)}) \: + \:
        q_j \, e_{r_{j+1}-r_j}(\overline{X}^{(j)}) 
        e_{\ell - r_{j+1}+r_j}(X^{(j-1)})
        \end{equation} 
        for \(\ell=1,\dots,r_{j+1}\) and \(j=1,\dots,k\).
    
        Let \(S\) and \(I_q\) be as in \Definition{Iq}, then we can prove similarly that there is an isomorphism \(\overline{\QK}_T(X)\to S[\![q]\!]/I_q\) between the `Coulomb' and `Whitney' presentations given by: 
        \[
            e_\ell(X^{(j)})\mapsto e_\ell(X^{(j)}),\quad 1\leq \ell\leq r_j,\ 1\leq j\leq k,
        \]
        \[
            e_r(\overline{X}^{(j)})\mapsto 
            \begin{cases}
                e_r(Y^{(j)}) & 1\leq r< r_{j+1}-r_j\\
                e_{r_{j+1}-r_j}(Y^{(j)})/(1-q_j) & r=r_{j+1}-r_j
            \end{cases},\quad 1\leq j\leq k.
        \]
If we assume the \Cref{conj:lambda_y} above, then combining \Cref{thm:complete-intro} and the isomorphism 
above gives a geometric interpretation
of the variables from the `Coulomb' presentation.
\end{remark}

\section{The Whitney presentation for \texorpdfstring{$\QK_T(\Fl(n))$}{the equivariant quantum K ring of complete flag varieties}}\label{sec:fullflag}
In this section, we prove \Cref{conj2:lambda_y} in the case when $X=\Fl(n)=\Fl(1,\dots,n-1;n)$ is the complete flag variety, 
{\em under the assumption that \Conjecture{full} holds}. The main idea is to
rewrite the relations in \Cref{conj2:lambda_y} so that
only relations involving multiplication by $\wedge^i \cS_i$ appear (see \Lemma{reduction}),
then apply \Conjecture{full}. 
This reduction is not available for arbitrary partial flag varietys. 
Another key ingredient in our proof is to realize curve neighborhoods by 
certain iterated Demazure operators, a technique possibly of independent interest.

\subsection{A first reduction} In this subsection we rewrite the relations in \Cref{conj2:lambda_y} in a way that highlights the role of $\wedge^i \cS_i$. The results in this subsection are logically independent of \Conjecture{full}.

We start by recalling the relations \eqn{lambda_y_rel}:
\begin{equation*}\label{eqn2:lambda_y_full}
    \lambda_y(S_i)\star\lambda_y(\cS_{i+1}/\cS_i)=\lambda_y(\cS_{i+1})-y\frac{q_i}{1-q_i}\cS_{i+1}/\cS_i\star(\lambda_y(\cS_i)-\lambda_y(\cS_{i-1})).
\end{equation*}
After multiplying both sides by \(1-q_i\) and expanding, we can write the relations as 
\begin{equation}\label{eqn:quot_full}
        (1-q_i)(\wedge^\ell \cS_{i+1}-\wedge^\ell \cS_i)=\cS_{i+1}/\cS_i\star(\wedge^{\ell-1}S_i-q_i\wedge^{\ell-1}\cS_{i-1})
 \end{equation}      
 for $\ell=1,\dots, i+1; \ i=1,\dots,n-1$.

\begin{lemma}\label{lemma:reduction}
    Relations \eqn{quot_full} are equivalent to the relations
    \begin{equation}\label{eqn:det_full}
        \wedge^i S_i\star(\wedge^\ell \cS_{i+1}-\wedge^\ell \cS_i)=\wedge^{i+1}\cS_{i+1}\star(\wedge^{\ell-1}\cS_i-q_i\wedge^{\ell-1}\cS_{i-1}) \end{equation}
for $\ell=1,\dots, i+1; \ i=1,\dots,n-1$. 
\end{lemma}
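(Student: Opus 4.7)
The plan is to prove both implications of the claimed equivalence, with the bridge between the two formulations being the \emph{line-bundle identity}
\[
(\cS_{i+1}/\cS_i) \star \wedge^i \cS_i \;=\; (1-q_i)\, \wedge^{i+1}\cS_{i+1}.
\]
This identity is exactly the $\ell = i+1$ case of \eqref{eqn:quot_full}, upon using that $\wedge^{i+1}\cS_i = 0$ and $\wedge^i \cS_{i-1} = 0$ (since $\cS_i$ and $\cS_{i-1}$ have ranks $i$ and $i-1$ respectively). It is also obtained from the $\ell = 1$ case of \eqref{eqn:det_full} by invoking the classical additive identity $\cS_{i+1} - \cS_i = \cS_{i+1}/\cS_i$ in $\K_T(X) \subset \QK_T(X)$, which holds because $\cS_{i+1}/\cS_i$ is a line bundle.

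For the implication \eqref{eqn:quot_full} $\Rightarrow$ \eqref{eqn:det_full}, I would take the quantum product of both sides of \eqref{eqn:quot_full} with $\wedge^i \cS_i$; on the right-hand side the factor $\wedge^i \cS_i \star (\cS_{i+1}/\cS_i)$ is replaced by $(1-q_i)\wedge^{i+1}\cS_{i+1}$ via the line-bundle identity, and cancelling the scalar unit $1-q_i \in \K_T(\pt)[\![q]\!]$ yields \eqref{eqn:det_full}. For the converse implication, one rewrites $\wedge^{i+1}\cS_{i+1}$ on the right-hand side of \eqref{eqn:det_full} as $\tfrac{1}{1-q_i}\,\wedge^i\cS_i \star (\cS_{i+1}/\cS_i)$ via the same identity; then \eqref{eqn:det_full} becomes the quantum product of $\wedge^i\cS_i$ with \eqref{eqn:quot_full}, up to the scalar unit $1-q_i$, and one cancels $\wedge^i\cS_i$ on both sides.

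The main technical point, in the converse direction, is justifying the cancellation of $\wedge^i \cS_i = \det \cS_i$. This uses that any line bundle class $L$ on $X$ is a $\star$-unit in $\QK_T(X)$: if $L^{-1} \in \K_T(X)$ is the classical inverse, then $L \star L^{-1}$ specializes to $L \cdot L^{-1} = 1$ at $q = 0$, and an element of $\QK_T(X) = \K_T(X) \otimes_{\K_T(\pt)} \K_T(\pt)[\![q]\!]$ whose $q = 0$ reduction is invertible in $\K_T(X)$ is itself a $\star$-unit, by a straightforward iterative construction of the $\star$-inverse coefficient-by-coefficient in the $q$-adic topology. With this invertibility in hand, both implications reduce to routine algebraic rearrangement; the only care needed is with the boundary conventions $\wedge^0 V = 1$ and $\wedge^k V = 0$ for $k > \mathrm{rk}\,V$ when specializing to the extreme values $\ell = 1$ and $\ell = i+1$.
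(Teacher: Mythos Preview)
Your proof is correct and follows essentially the same strategy as the paper: both directions are obtained by $\star$-multiplying by $\det\cS_i$ (or equivalently substituting via the bridge identity $\cS_{i+1}/\cS_i \star \wedge^i\cS_i = (1-q_i)\wedge^{i+1}\cS_{i+1}$, which is simultaneously the $\ell=i+1$ case of \eqref{eqn:quot_full} and the $\ell=1$ case of \eqref{eqn:det_full}) and then cancelling. The only notable difference is in how you justify that $\det\cS_i$ is a $\star$-unit: the paper iterates the bridge identity to obtain $\det\cS_i \star \cS_{i+1}/\cS_i \star \cdots \star \C^n/\cS_{n-1} = (1-q_i)\cdots(1-q_{n-1})\wedge^n\C^n$, whereas you give the cleaner general argument that any line bundle class is $\star$-invertible by $q$-adic lifting of its classical inverse.
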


\begin{proof}
    Note that 
    \begin{equation}\label{eqn:special_rel}
        \wedge^i \cS_i\star \cS_{i+1}/\cS_i=(1-q_i)\wedge^{i+1}\cS_{i+1}
    \end{equation} 
    is the \(\ell=1\) case of \eqn{det_full} and the \(\ell=i+1\) case of \eqn{quot_full}. It follows that 
    \begin{equation*}
        \det(\cS_i)\star\cS_{i+1}/\cS_i\star\dots\star\cS_{n-1}/\cS_{n-2}\star\C^n/\cS_{n-1}=(1-q_i)\cdots(1-q_{n-1})\wedge^n\C^n
    \end{equation*}
    and, in particular, \(\det(\cS_i)\) and \(\cS_{i+1}/\cS_i\) are units for \(i=1,\dots, n-1\).
    
    Multiplying both sides of \eqn{det_full} by \(\cS_{i+1}/\cS_i\) and using \eqn{special_rel}, we have
    \begin{multline*}
            (1-q_i)\wedge^{i+1}\cS_{i+1}\star(\wedge^{\ell}\cS_{i+1}-\wedge^{\ell}\cS_{i}) =
            \cS_{i+1}/\cS_i\star\det(\cS_i)\star(\wedge^\ell \cS_{i+1}-\wedge^\ell \cS_i)\\=\cS_{i+1}/\cS_i\star\wedge^{i+1}\cS_{i+1}\star(\wedge^{\ell-1}\cS_i-q_i\wedge^{\ell-1}\cS_{i-1})\quad\text{for }\ell=1,\dots, i+1,
    \end{multline*}
showing that \eqn{det_full} implies \eqn{quot_full}.
Multiplying both sides of \eqn{quot_full} by \(\det(\cS_i)\) and using \eqn{special_rel}, we have 
    \begin{multline}\label{eqn:rel_2}
            (1-q_i)(\wedge^\ell \cS_{i+1}-\wedge^\ell \cS_i)\star\det(\cS_i)=\det(\cS_i)\star\cS_{i+1}/\cS_i\star(\wedge^{\ell-1}\cS_i-q_i\wedge^{\ell-1}\cS_{i-1})\\
        =(1-q_i)\det(\cS_{i+1})\star(\wedge^{\ell-1}\cS_i-q_i\wedge^{\ell-1}\cS_{i-1})\quad\text{for }\ell=1,\dots, i+1,
    \end{multline}
    showing that \eqn{quot_full} implies \eqn{det_full}. 
\end{proof}

\subsection{Curve neighborhoods of tautological bundles} The main result in this subsection
is \Cref{cor:partialzd},
which 
{establishes an equality between 
curve neighborhoods of exterior powers of adjacent tautological bundles, when the degrees differ by a simple (co)root.}
This is the key calculation needed to
prove the QK relations in the next subsection. In fact, some of these results work 
for generalized flag varieties $G/B$, 
and we try to utilize suggestive 
notation so that it will be easy for the cognizant reader to rewrite 
the arguments in that generality.
This section is still logically independent of the validity of \Conjecture{full}.

Denote by 
$\alpha_i= (0, \ldots, 1, \ldots, 0)$ (with $1$ at position $i$) the degree of the (Schubert) curve
$X_{s_i}$. The degree $\alpha_i$ may also be identified with the simple (co)root 
$\varepsilon_i - \varepsilon_{i+1}$. More generally, denote by 
$R^+ = \{ \varepsilon_i - \varepsilon_j: 1 \le i < j \le n \}$ the set of positive roots of type \(A_{n-1}\), equipped with the  
partial order given by $\alpha \le \beta$ if $\beta - \alpha$ is a non-negative combination of simple
roots.

The Weyl group \(W\) is equipped with an associative monoid structure given by the {\em Demazure product}. 
For $u \in W$ and $s_i=(i,i+1)$ a simple reflection, 
\[ u \cdot s_i = \begin{cases} u s_i & us_i >u \/; \\
u & \textrm{ otherwise } \/. \end{cases}
\] 
More generally, if $v = s_{i_1} s_{i_2} \ldots s_{i_p}$ is a reduced expression, then
$u \cdot v = (((u \cdot s_{i_1}) \cdot s_{i_2}) \ldots \cdot s_{i_p})$. 
For $\beta \in R^+$, the support $\supp(\beta)$ is the set of simple roots $\alpha_i$ such that 
$\beta - \alpha_i \ge 0$. We will also implicitly utilize the fact that if $\alpha, \beta \in R^+$, then 
\begin{equation}\label{eqn:demcom} s_\alpha \cdot s_\beta = s_\beta \cdot s_\alpha \textrm{ if } \supp(\alpha) \subset \supp(\beta) \/, \textrm{ or } \supp(\beta) \subset \supp(\alpha) \/, \end{equation}
or $\supp( \alpha) \cap \supp(\beta) = \emptyset$ and the supports are not adjacent. It is easy to check this directly, and it also follows from \cite[Prop. 4.8]{buch.m:nbhds}.

Recall the definition of curve neighborhoods from \eqref{def:curvenbhd}. For an effective degree $d$,    
define the Weyl group element $z_d \in W$ by the requirement that the curve neighborhood $\Gamma_d(X_{s_0}) = X_{z_d}$, where \(s_0\) denotes the identity element in \(W\).
The element $z_d$ may be calculated recursively utilizing a formula
from \cite{buch.m:nbhds}: if $\beta$ is any maximal root such that $\beta\le d$, then 
\begin{equation}\label{eqn:reczd} z_d = z_{d - \beta} \cdot s_\beta \/. \end{equation}
The support $\supp(d)$ of $d=\sum d_i \alpha_i$ is defined similarly as the set of simple roots $\alpha_i$ such that $d_i \neq 0$. 
If $\alpha_i \in \supp(d)$, denote by $d' = d - \alpha_{i}$. 
In other words,
\[
    d_j'=
    \begin{cases}
        d_j & j\neq i\\d_i-1 & j=i.
    \end{cases}
\]
(The index $i$ will be understood from the context.)
The proof of the following lemma is a direct application of the recursive expression from \eqn{reczd}
and is left to the reader.

\begin{lemma}\label{lemma:d'} Let $d$ be a degree such that $d_i \neq 0$ and $d_{i+1}=0$. 
Assume that $z_d = s_{\beta_1}\cdot \ldots \cdot s_{\beta_k }\cdot \ldots \cdot s_{\beta_p}$ 
for some positive roots $\beta_1, \ldots, \beta_p$ such that 
$\alpha_i \in \supp(\beta_k) \subset \ldots \subset \supp(\beta_p)$ and this is the longest chain with this property. Then $\alpha_{i+1}$ is not in the support of any of the roots $\beta_j$, and, furthermore,
\[ z_{d'} = s_{\beta_1}\cdot \ldots \cdot s_{\beta_k - \alpha_i} \cdot \ldots \cdot s_{\beta_p} \/. \]
\end{lemma}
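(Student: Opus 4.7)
The plan is to induct on $|d|=\sum_j d_j$, using the recursion \eqn{reczd} together with a case analysis based on the length of the nested chain of supports.

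First I would dispatch the first claim by observing that iterating \eqn{reczd} yields the identity $\sum_{j=1}^{p}\beta_j = d$ in the positive coroot lattice. Since $d_{i+1}=0$ and each $(\beta_j)_{i+1}\geq 0$, every $\beta_j$ must satisfy $(\beta_j)_{i+1}=0$, i.e.\ $\alpha_{i+1}\notin\supp(\beta_j)$. Writing $\beta_j=\varepsilon_{a_j}-\varepsilon_{b_j}$, the key structural consequence is that $\alpha_i\in\supp(\beta_j)$ together with $\alpha_{i+1}\notin\supp(\beta_j)$ forces $b_j=i+1$ for every $j\geq k$. In particular $\beta_k-\alpha_i=\varepsilon_{a_k}-\varepsilon_i$ is a positive root, degenerating to $0$ in the edge case $\beta_k=\alpha_i$, where $s_0$ should be interpreted as the identity.

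For the main identity I would peel off $\beta_p$ from the given decomposition and split the induction into two cases according to whether $k<p$ or $k=p$. If $k<p$, then $d_i=p-k+1\geq 2$, so $\beta_p\leq d'$ and $\beta_p$ remains a maximal root of $d'$, since any strictly larger root would also exceed $d$. Applying \eqn{reczd} gives $z_{d'}=z_{(d-\beta_p)-\alpha_i}\cdot s_{\beta_p}$, and invoking the induction hypothesis on $z_{d-\beta_p}=s_{\beta_1}\cdots s_{\beta_{p-1}}$, whose chain is $\supp(\beta_k)\subset\cdots\subset\supp(\beta_{p-1})$ and which again satisfies $(d-\beta_p)_{i+1}=0$, yields the desired formula. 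If $k=p$, then $\beta_p$ is the unique chain root, $d_i=1$, and $\beta_p\not\leq d'$; instead I would take $\beta_p-\alpha_i$ as the first maximal root for $d'$, its maximality following from $d'_i=0$ together with $d_{a_p-1}=d_{i+1}=0$, both of which are forced by the maximality of $\beta_p$ in $d$ and the hypothesis $d_{i+1}=0$. Then \eqn{reczd} gives $z_{d'}=z_{d-\beta_p}\cdot s_{\beta_p-\alpha_i}=s_{\beta_1}\cdots s_{\beta_{p-1}}\cdot s_{\beta_p-\alpha_i}$, as required.

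The hardest part will be the case $k=p$ combined with verifying that $\beta_p-\alpha_i$ really is a maximal root of $d'$; this hinges on the hypothesis $d_{i+1}=0$, which pins down the right endpoint of $\supp(\beta_p)$ to be $\alpha_i$. A closely related subtlety is the degenerate subcase $\beta_k=\alpha_i$, where one must carefully interpret $s_0$ as the identity and check that the chain length in the decomposition of $z_{d-\beta_p}$ has decreased by exactly one, so that the induction unfolds consistently.
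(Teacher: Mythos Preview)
Your proposal is correct and is exactly the ``direct application of the recursive expression \eqref{eqn:reczd}'' that the paper has in mind; the paper omits the proof entirely and leaves it to the reader, so there is no more detailed argument to compare against. Your induction on $|d|$ with the $k<p$ versus $k=p$ case split, together with the observation that $b_j=i+1$ for all $j\ge k$, is the natural way to unpack the recursion, and the edge case $\beta_k=\alpha_i$ is handled correctly by reading $s_0$ as the identity.
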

Let $p_i: \Fl(n) \to \Fl(1,\dots,\hat{i},\dots,n-1;n)$ be the natural projection. 
The {\em Demazure operator} is defined by $\partial_i = p_i^* {p_i}_*$, and it is a
$\K_T(\pt)$-linear endomorphism of $\K_T(\Fl(n))$. These operators satisfy 
$\partial_i^2=\partial_i$, and the
usual commutation and braid relations. In particular, if
$u= s_{i_1} \ldots s_{i_p}$ is a reduced decomposition of $u$, 
then there is a well-defined operator $\partial_u = \partial_{i_1} \ldots \partial_{i_p}$.
The Demazure operators satisfy $\partial_i \cO_u = \cO_{u \cdot s_i}$, therefore 
$\partial_{v^{-1}} \cO_u = \cO_{u \cdot v}$. We will utilize this to obtain curve neighborhoods 
by iterated Demazure operators. 

Recall that the degree $d$ curve neighborhood of a class $\sigma \in \K_T(\Fl(n))$ is
$\sigma[d]={\ev_2}_*(\ev_1^*(a))$. Using that $z_d = z_d^{-1}$ (cf.~\cite[Cor.4.9]{buch.m:nbhds}), 
we obtain that
\[ \cO_u[d]={\ev_2}_*(\ev_1^*(\cO_u))=\cO_{\Gamma_d(X_u)}=\cO_{u \cdot z_d} =  \cO_{u \cdot z_d^{-1}} = \partial_{z_d} (\cO_u) \/. \]
Since the Schubert classes form a basis, it follows that for any $\sigma \in \K_T(\Fl(n))$,
\begin{equation}\label{eqn:partial_zd} \sigma[d]={\ev_2}_*(\ev_1^*(\sigma)) = \partial_{z_d}(\sigma) \/. \end{equation} 
(This equation easily generalizes to any $G/B$.)
Our next goal is to calculate the curve neighborhoods of the bundles $\wedge^\ell \cS_i$ in \(\K_T(\Fl(n))\). 
To do this we need the following lemma. 
\begin{lemma}\label{lemma:dem-bundles} 
    For \(\ell=1,\dots,k\), the following hold:
    \begin{enumerate}
        \item\label{eqn:partial_wedge} \[ \partial_i (\wedge^\ell \cS_k) = \begin{cases} \wedge^\ell \cS_k & \textrm{ if } i \neq k \\\wedge^{\ell} \cS_{k-1} & \textrm{ if } i = k \/. \end{cases}\]
        \item\label{eqn:partial_product} For $i \neq k$ and any $\sigma \in \K_T(\Fl(n))$, \[ \partial_i(\wedge^\ell \cS_k \cdot \sigma) = \wedge^\ell \cS_k \cdot \partial_i(\sigma) \/. \]
    \end{enumerate}
\end{lemma}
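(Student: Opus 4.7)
The plan is to exploit the projection-formula compatibility of the Demazure operator $\partial_i = p_i^* (p_i)_*$ together with a very special case of Kapranov's vanishing (\Cref{prop:relativeBWB}) applied to the $\mathbb{P}^1$-bundle $p_i$.

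First, I would observe that for any $k \neq i$ the bundle $\cS_k$ is pulled back from the base $\Fl(1,\ldots,\hat i,\ldots,n-1;n)$, since $p_i$ forgets only the $i$-th step of the flag. Consequently $\wedge^\ell \cS_k$ is pulled back as well, and the projection formula gives
\[ \partial_i(\wedge^\ell \cS_k \cdot \sigma) \;=\; p_i^*\bigl(\wedge^\ell \cS_k \cdot (p_i)_*(\sigma)\bigr) \;=\; \wedge^\ell \cS_k \cdot \partial_i(\sigma), \]
which is exactly part (2). Specializing $\sigma = 1$ and noting $(p_i)_*(\cO_{\Fl(n)}) = \cO$ on the base (the fiber is $\mathbb{P}^1$) handles part (1) in the case $i \neq k$.

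For the remaining case $i = k$, I would use the extension $0 \to \cS_{k-1} \to \cS_k \to \cS_k/\cS_{k-1} \to 0$, in which the cokernel is a line bundle, to decompose
\[ \wedge^\ell \cS_k \;=\; \wedge^\ell \cS_{k-1} \,+\, \wedge^{\ell-1}\cS_{k-1} \cdot (\cS_k/\cS_{k-1}). \]
The first summand is pulled back from the base and so is fixed by $\partial_k$. For the second, I would identify $p_k$ with the Grassmann bundle $\mathbb{G}(1, \cS_{k+1}/\cS_{k-1})$ over $\Fl(1,\ldots,\hat k,\ldots,n-1;n)$, whose tautological sub-line-bundle is $\underline{\cS} = \cS_k/\cS_{k-1}$. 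Kapranov's vanishing \Cref{prop:relativeBWB}(1) with partition $\lambda = (1)$ (permissible since the relative dimension is $n-k = 2-1 = 1$ in the notation of that proposition) yields $R^\bullet(p_k)_*(\cS_k/\cS_{k-1}) = 0$, so $\partial_k(\cS_k/\cS_{k-1}) = 0$. A further application of the projection formula to the pulled-back factor $\wedge^{\ell-1}\cS_{k-1}$ kills the whole second summand, leaving $\partial_k(\wedge^\ell \cS_k) = \wedge^\ell \cS_{k-1}$.

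I do not expect any substantive obstacle here: the only nontrivial ingredient is the correct identification of $p_k$ as the projective bundle $\mathbb{P}(\cS_{k+1}/\cS_{k-1})$ with tautological sub $\cS_k/\cS_{k-1}$, after which Kapranov's vanishing is directly applicable. Everything else is routine K-theoretic bookkeeping with the projection formula.
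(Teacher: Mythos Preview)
Your proposal is correct and follows essentially the same route as the paper's proof: both use the projection formula for the pulled-back case $i\neq k$, and for $i=k$ both split $\wedge^\ell\cS_k$ via the extension by $\cS_k/\cS_{k-1}$ and kill the second summand using Kapranov's vanishing $(p_k)_*(\cS_k/\cS_{k-1})=0$. The only difference is that you spell out the identification of $p_k$ as the $\mathbb{P}^1$-bundle $\mathbb{G}(1,\cS_{k+1}/\cS_{k-1})$, which the paper leaves implicit.
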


\begin{proof} If $i \neq k$, then $\cS_k$ is also a bundle on $\Fl(1,\dots,\hat{k},\dots,n-1;n)$, therefore, by the projection formula
${p_i}_* (\wedge^\ell \cS_k) = \wedge^\ell \cS_k$. This implies part \eqn{partial_product} and the first branch of part \eqn{partial_wedge}.
If $i=k$, from the short exact sequence
$0 \to \cS_{k-1} \to \cS_k \to \cS_k/\cS_{k-1} \to 0$ it follows that 
\begin{equation}\label{eqn:wedge-eqs}\wedge^\ell \cS_k = \wedge^\ell \cS_{k-1} + \wedge^{\ell-1} \cS_{k-1}\cdot \cS_k/\cS_{k-1} \/. \end{equation} Then
\[ \begin{split} {p_k}_* ( \wedge^\ell \cS_k) & = {p_k}_*(\wedge^\ell \cS_{k-1} + \wedge^{\ell-1} \cS_{k-1}\cdot \cS_k/\cS_{k-1})\\
& = {p_k}_*(\wedge^\ell \cS_{k-1}) + \wedge^{\ell-1} \cS_{k-1}\cdot {p_k}_*(\cS_k/\cS_{k-1}) \\
& =  \wedge^\ell \cS_{k-1} \/. \end{split}
\]
Here the last equality follows because ${p_k}_*(\cS_k/\cS_{k-1})=0$ by \Proposition{relativeBWB}. The second branch of part \eqn{partial_wedge} follows from applying $p_k^*$ to this. 
\end{proof}
{
\begin{cor}\label{cor:partial_root}
    For \(\varepsilon_a-\varepsilon_{i+1}\in R^+\) and \(\ell=1,\dots,k\), the following hold:
    \[
        \partial_{\varepsilon_a-\varepsilon_{i+1}}(\wedge^\ell\cS_k)=\begin{cases}
            \wedge^\ell\cS_{a-1} & a\leq k\leq i\\\wedge^\ell\cS_k & \text{otherwise.}\\
        \end{cases}
    \]
\end{cor}
\begin{proof}
    The reflection \(s_{\varepsilon_a-\varepsilon_{i+1}}\) has reduced decomposition \(s_{i}s_{i-1}\ldots s_a\ldots s_{i-1}s_i\). The claim follows from repeated application of \Cref{lemma:dem-bundles}.
\end{proof}
}
\begin{cor}\label{cor:partialzd} Let $d$ be a degree such that $d_i \neq 0, d_{i+1}=0$. Then
\[ \partial_{z_d}(\wedge^{\ell} \cS_{i}) = \partial_{z_{d'}}(\wedge^{\ell} \cS_{i-1})\text{ for }\ell=1,\dots, i. \]
\end{cor}
\begin{proof} Let $d$ be an effective degree such that $d_i \neq 0$ and $d_{i+1}=0$. From
the recursive expression \eqn{reczd} we can write 
\[ z_d = s_{\beta_1}\cdot \ldots \cdot s_{\beta_k }\cdot \ldots \cdot s_{\beta_p} \]
 for some positive roots $\beta_1, \ldots, \beta_p$ such that 
 $\alpha_i\in \supp(\beta_k)\subseteq\dots\subseteq \supp(\beta_p)$,
 $\alpha_i\not\in \supp(\beta_j)$ for $1\leq j\leq k-1$, and 
 $\alpha_{i+1}\not\in \supp(\beta_j)$ for $1\leq j\leq p$.
In practice, this means that for any $k \le j \le p$, we have
$\beta_j = \varepsilon_{a_j} - \varepsilon_{i+1}$, where $i \ge a_k \ge a_{k+1} \ge \ldots \ge a_p$. By \Cref{lemma:d'}, we may write 
\[ z_{d'} = s_{\beta_1}\cdot \ldots \cdot s_{\beta_k - \alpha_i}\cdot \ldots \cdot s_{\beta_p} \/. \]
Define $z'\coloneqq s_{\beta_1}\cdot \ldots \cdot s_{\beta_{k-1}}$ and $z''=s_{\beta_k-\alpha_i}\cdot \ldots \cdot s_{\beta_p}$.
After writing 
$s_{\beta_k} = s_i \cdot s_{\beta_k - \alpha_i} \cdot s_i$, and observing that for any $j \ge k+1$, $s_i \cdot s_{\beta_{j}} = s_{\beta_{j}}\cdot s_i = s_{\beta_j}$ (by \eqn{demcom} above),
it follows that 
\[ z_d = z' \cdot s_i \cdot z'' \cdot s_i \textrm{ and } z_{d'} = z' \cdot z'' \/. \]
Since the indices $a_j > a_p -1$ for any $j \ge k$, it follows from {repeated application of \Corollary{partial_root}}
that
\[ \partial_{z'' \cdot s_i} (\wedge^{\ell} \cS_{i}) = \partial_{z''} (\wedge^\ell \cS_{i-1}) =
\wedge^{\ell} \cS_{a_p-1}\/, \] 
and since \(a_p-1<i\), we have
\[ 
\partial_{s_i \cdot z'' \cdot s_i} (\wedge^{\ell} \cS_{i}) = \partial_i \partial_{z''} \partial_i (\wedge^{\ell} \cS_{i}) =
\partial_i (\wedge^{\ell} \cS_{a_p-1}) = \wedge^{\ell} \cS_{a_p-1} \/. \]
In particular, this shows that $\partial_{s_i \cdot z'' \cdot s_i} (\wedge^{\ell} \cS_{i}) =
\partial_{z''} (\wedge^{\ell} \cS_{i-1})$. 
Then the claim follows from the calculation:
\[ \partial_{z_d} (\wedge^{\ell} \cS_{i}) = \partial_{z'} \partial_{s_i \cdot z'' \cdot s_i} (\wedge^{\ell} \cS_{i}) =
\partial_{z'} \partial_{z''} (\wedge^{\ell} \cS_{i-1}) = \partial_{z_{d'}} (\wedge^{\ell} \cS_{i-1}) \/. \]
\end{proof}
\subsection{The quantum K Whitney relations} In this subsection, we prove relations \eqn{det_full}
for the complete flag variety $X=\Fl(n)$ assuming  \Conjecture{full}. As a consequence, we get that \Conjecture{full} implies the Whitney presentation for \(\QK_T(X)\).

Let $d$ be an effective degree and \(\sigma, \tau\in\K_T(X)\). In light of \eqn{partial_zd}, \Conjecture{full} 
may be restated as 
\begin{equation}\label{eqn:KGW_full}
    \egw{\det(\cS_i),\sigma,\tau}{d}=\begin{cases}0 & d_i\neq0\\\chi_X^T\left(\det(\cS_i)\cdot\sigma\cdot\partial_{z_d}\tau\right) & d_i =0 \/\end{cases}
\end{equation}
for \(i=1,\dots,n-1\).
\begin{thm}\label{thm:full}For any $\sigma \in \K_T(X)$, \(\ell=1,\dots, i+1\), and \(i=1,\dots,n-1\), 
\Conjecture{full} implies  
    \begin{equation}\label{eqn:kgw_identity}
        \egw{\det(\cS_i), \wedge^\ell\cS_{i+1}-\wedge^\ell\cS_i,\sigma}{d}=\egw{\det(\cS_{i+1}),\wedge^{\ell-1}\cS_i,\sigma}{d}-\egw{\det(\cS_{i+1}),\wedge^{\ell-1}\cS_{i-1},\sigma}{d-\alpha_i} \/,
    \end{equation}
with the convention that
$\egw{\det(\cS_{i+1}),\wedge^{\ell-1}\cS_{i-1},\sigma}{d-\alpha_i} =0$ unless $\alpha_i \in   
\supp(d)$. 

In particular, the equalities \eqn{det_full} hold.
 \end{thm}
 \begin{proof}
We distinguish three cases. 
\begin{itemize} \item {\bf $d_i \cdot d_{i+1} \neq 0$}. In this case, both sides of \eqn{kgw_identity}
are equal to $0$ by \eqn{KGW_full}.
\item $d_i \neq 0,\ d_{i+1}=0$. By \eqn{KGW_full}, the left-hand side of \eqn{kgw_identity} is equal to $0$, and the 
right-hand side is equal to
\[ 
    \chi_X^T\left(\det(\cS_{i+1}) \cdot \left(\partial_{z_d} (\wedge^{\ell-1} \cS_i) - \partial_{z_{d-\alpha_i}} (\wedge^{\ell-1}\cS_{i-1})\right)\cdot\sigma\right). 
\]
Finally, note that by \Corollary{partialzd},
\(\partial_{z_d} (\wedge^{\ell-1} \cS_i) - \partial_{z_{d-\alpha_i}} (\wedge^{\ell-1}\cS_{i-1})=0.\)
\item \(d_i=0\). By \eqn{KGW_full} and \eqn{wedge-eqs}, 
the left-hand side of \eqn{kgw_identity} equals
\begin{align*}
    \chi_X^T\left(\det(\cS_i)\cdot(\wedge^\ell\cS_{i+1}-\wedge^\ell\cS_i)\cdot\partial_{z_d}\sigma\right)&=\chi_X^T\left(\det(\cS_i)\cdot(\cS_{i+1}/\cS_i)\cdot\wedge^{\ell-1}\cS_i\cdot\partial_{z_d}\sigma\right)\\
    &=\chi_X^T\left(\det(\cS_{i+1})\cdot\wedge^{\ell-1}\cS_i\cdot\partial_{z_d}\sigma\right),
\end{align*}
which is equal to the right-hand side of \eqn{kgw_identity}.\end{itemize}\end{proof}
{By \Lemma{reduction} and \Theorem{full} we have:
    \begin{cor}\label{cor:fln}
    \Conjecture{full} implies \Cref{conj2:lambda_y} for \(\Fl(n)\).
\end{cor}}

\appendix 
\section{Finite generation over formal power series}\label{app:fg}
The main result of this Appendix is \Cref{prop:KNak}, which is the key result 
needed in the proof of the presentation
of the quantum K ring in \Cref{thm:all_relations}. 
It gives mild conditions under which an 
{algebra over a formal power series ring}
is finitely generated as a module, allowing one to apply 
\Cref{prop:Nakiso}, or, more generally, Nakayama-type results. {One may also deduce \Cref{prop:KNak} directly from \cite[Exercise 7.4]{eisenbud:CAbook}.\begin{footnote} {We thank Prof. S. Naito for providing us with this reference.}\end{footnote} For the convenience of the reader, we include a proof.}

We start with the following general result
proved in \cite[Thm. 8.4]{matsumura:commutative}, see also \cite[\href{https://stacks.math.columbia.edu/tag/031D}{Tag 031D}]{StacksProj}.
\begin{lemma}\label{lemma:fgstacks} Let $A$ be a commutative ring and let $\mathfrak{a}\subset A$ be an ideal. 
Let $M$ be an $A$-module. Assume that $A$ is $\mathfrak{a}$-adically complete,
$\bigcap_{n \ge 1} \mathfrak{a}^n M = (0)$, and that $M/\mathfrak{a}M$ is a finitely generated $A/\mathfrak{a}$-module.
Then $M$ is a finitely generated $A$-module.\end{lemma}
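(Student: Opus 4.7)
The plan is to imitate the classical argument for Nakayama's lemma in the complete setting: lift a generating set modulo $\mathfrak{a}$ to $M$ and show completeness lets us inductively ``improve'' residues until they land in $\bigcap_n \mathfrak{a}^n M$, which is $0$ by hypothesis.

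More precisely, first I would choose $m_1,\dots,m_r \in M$ whose images $\bar m_1,\dots,\bar m_r$ generate $M/\mathfrak{a}M$ over $A/\mathfrak{a}$, and set $N:=\sum_{i=1}^r A m_i \subset M$. The goal is to show $N=M$. Given any $x\in M$, the hypothesis on $M/\mathfrak{a}M$ gives coefficients $a_i^{(0)}\in A$ with
\[
x - \sum_{i=1}^r a_i^{(0)} m_i \;\in\; \mathfrak{a}M.
\]
Calling this residue $x_1$, I would inductively write $x_k \in \mathfrak{a}^k M$ as
\[
x_k = \sum_{i=1}^r a_i^{(k)} m_i + x_{k+1},\qquad a_i^{(k)}\in \mathfrak{a}^k,\quad x_{k+1}\in \mathfrak{a}^{k+1}M,
\]
which is possible because $\mathfrak{a}^k M$ is generated over $A$ by the products $\alpha m$ with $\alpha\in\mathfrak{a}^k$ and $m\in M$, and each such $m$ is congruent mod $\mathfrak{a}M$ to an $A$-linear combination of the $m_i$.

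Next I would use $\mathfrak{a}$-adic completeness of $A$ to define $a_i := \sum_{k\ge 0} a_i^{(k)} \in A$, the sum converging because its partial sums form a Cauchy sequence with $a_i^{(k)}\in \mathfrak{a}^k$. For the partial sums $a_i^{(\le N)} := \sum_{k=0}^{N-1} a_i^{(k)}$, the construction gives $x-\sum_i a_i^{(\le N)} m_i = x_N \in \mathfrak{a}^N M$. Writing
\[
x - \sum_{i=1}^r a_i m_i \;=\; x_N \;+\; \sum_{i=1}^r \bigl(a_i^{(\le N)}-a_i\bigr) m_i,
\]
and noting that $a_i - a_i^{(\le N)} \in \mathfrak{a}^N$, we conclude $x - \sum_i a_i m_i \in \mathfrak{a}^N M$ for every $N\ge 1$.

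Finally, the assumption $\bigcap_{n\ge 1} \mathfrak{a}^n M = 0$ forces $x = \sum_i a_i m_i \in N$, so $M=N$ is generated by $m_1,\dots,m_r$. The only mildly delicate point is the inductive step producing $a_i^{(k)}\in\mathfrak{a}^k$: it needs that the images of $m_1,\dots,m_r$ generate $\mathfrak{a}^k M / \mathfrak{a}^{k+1}M$ as an $A/\mathfrak{a}$-module, which follows from the surjection $\mathfrak{a}^k \otimes_A (M/\mathfrak{a}M) \twoheadrightarrow \mathfrak{a}^k M/\mathfrak{a}^{k+1}M$ together with the assumption on $M/\mathfrak{a}M$. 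Everything else is bookkeeping of power-series sums, with completeness of $A$ doing the essential work.
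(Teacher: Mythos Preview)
Your argument is correct and is essentially the standard proof of this result; note that the paper itself does not supply a proof but simply cites \cite[Thm.~8.4]{matsumura:commutative} and \cite[\href{https://stacks.math.columbia.edu/tag/031D}{Tag 031D}]{StacksProj}, where the same successive-approximation argument is given.
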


For a commutative ring $S$ with $1$ 
we denote by $\Jac(S)$ its Jacobson radical,
i.e., the intersection of all its maximal ideals. It is proved in \cite[Prop. 1.9]{AM:intro}
that $x \in \Jac(S)$ if and only if $1-xy$ is a unit in $S$ for all $y \in S$. 

\begin{lemma}\label{lemma:fJac} 
    Let $R,S$ be commutative rings with $1$ and $\pi: R \to S$ be a surjective ring homomorphism with $\pi(1)=1$. Then:
    \begin{enumerate}
        \item $\pi(\Jac(R)) \subseteq \Jac(S)$;
        \item\label{item:ideal} If \(J\) is an ideal in \(R\), then $\pi(J)$ is an ideal in $S$.
    \end{enumerate}
\end{lemma}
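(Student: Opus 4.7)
The plan is to prove both parts using only the surjectivity of $\pi$ together with the standard fact that ring homomorphisms preserve units, plus the Jacobson-radical criterion recalled in the excerpt just before the lemma: $x \in \Jac(R)$ iff $1-xy$ is a unit of $R$ for every $y \in R$.

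For part (1), I would start with $x \in \Jac(R)$ and aim to show $\pi(x) \in \Jac(S)$ by checking the unit criterion. Given any $s \in S$, surjectivity of $\pi$ provides $y \in R$ with $\pi(y) = s$; then
\[
1 - \pi(x)s \;=\; \pi(1) - \pi(x)\pi(y) \;=\; \pi(1-xy).
\]
Since $x \in \Jac(R)$, the element $1 - xy$ is a unit in $R$, say with inverse $u$, and applying $\pi$ (a unital ring homomorphism) gives $\pi(1-xy)\pi(u) = \pi(1) = 1$, so $1 - \pi(x)s$ is a unit of $S$. Hence $\pi(x) \in \Jac(S)$, as desired.

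For part (2), the argument is the standard verification that a surjective ring map sends ideals to ideals. Given $a, b \in \pi(J)$, write $a = \pi(a')$, $b = \pi(b')$ with $a', b' \in J$; then $a - b = \pi(a' - b') \in \pi(J)$ because $J$ is closed under subtraction. For closure under multiplication by $S$, take $s \in S$ and use surjectivity to pick $r \in R$ with $\pi(r) = s$; then $s a = \pi(r)\pi(a') = \pi(ra') \in \pi(J)$ because $ra' \in J$.

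Neither step involves a genuine obstacle: the first is just the unit criterion transported along $\pi$, and the second is the elementary fact that surjective unital ring maps send ideals to ideals. The only point worth flagging in writing is that surjectivity is used in \emph{both} parts (to realize arbitrary elements of $S$ as images of elements of $R$), whereas the hypothesis $\pi(1)=1$ is needed in part (1) to identify $\pi(1-xy)$ with $1-\pi(x)s$ inside $S$.
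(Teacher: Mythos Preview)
Your proof is correct and follows exactly the same approach as the paper's: part (1) is proved via the unit criterion transported along the surjection, and part (2) is the standard verification (which the paper simply declares ``immediate from the definitions''). Your write-up is just a more detailed version of the paper's terse argument.
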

\begin{proof} 
Let $x \in \Jac(R)$. Then \(1-xr\) is a unit in \(R\) for all \(r\in R\). This implies \(f(1-xr)=1-f(x)f(r)\) is a unit in \(S\). Since \(f\) is surjective, this means \(f(x)\in \Jac(S)\).

Part \eqref{item:ideal} is immediate from the definitions. 
\end{proof}

From now on, $S$ is a commutative Noetherian ring, and \(I\) is an ideal of the formal
power series ring $S[\![q_1, \ldots, q_k]\!]$. Let $$\pi: S[\![q_1, \ldots, q_k]\!] \to M\coloneqq S[\![q_1, \ldots, q_k]\!]/I$$ be the projection. Let 
\[ J\coloneqq \langle q_1, \ldots, q_k \rangle \subset S[\![q_1, \ldots, q_k]\!]. \]

\begin{lemma}\label{lemma:piJac} The ideal $\pi(J)$ is contained in the Jacobson radical of $M$.\end{lemma}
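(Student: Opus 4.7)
The plan is to first show that $J \subseteq \Jac(S[\![q_1,\ldots,q_k]\!])$, and then deduce the claim by applying \Cref{lemma:fJac}(1) to the surjection $\pi$.

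For the first step, I would use the characterization from \cite[Prop.~1.9]{AM:intro}: $a \in \Jac(S[\![q]\!])$ if and only if $1 - ab$ is a unit for every $b \in S[\![q]\!]$. So fix $a \in J$ and $b \in S[\![q]\!]$. Then $ab \in J$, which is exactly the condition needed for the geometric series
\[
\sum_{n \geq 0} (ab)^n
\]
to converge in $S[\![q]\!]$: its partial sums form a Cauchy sequence in the $J$-adic topology because $(ab)^n \in J^n$, and $S[\![q]\!]$ is $J$-adically complete (a standard fact for formal power series rings). This limit is the inverse of $1-ab$, so $1-ab$ is a unit, proving $a \in \Jac(S[\![q]\!])$.

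The second step is immediate: \Cref{lemma:fJac}(1) applied to the surjection $\pi$ gives
\[
\pi(J) \subseteq \pi(\Jac(S[\![q]\!])) \subseteq \Jac(M),
\]
which is the desired conclusion.

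There is no real obstacle here; the only point that must be handled with a little care is the convergence of the geometric series, which is where the completeness of $S[\![q]\!]$ (a property the reader may take for granted for formal power series) is used. All the other pieces are formal consequences of definitions and the previously stated \Cref{lemma:fJac}.
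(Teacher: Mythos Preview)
Your proposal is correct and follows essentially the same approach as the paper: show $J \subseteq \Jac(S[\![q]\!])$, then apply \Cref{lemma:fJac}(1). The only difference is that the paper cites \cite[Prop.~10.15]{AM:intro} for the first step, whereas you supply the underlying geometric-series argument directly; both are valid.
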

\begin{proof} By \cite[Prop. 10.15]{AM:intro} $J$ is contained in the Jacobson radical
of $S[\![q_1, \ldots, q_k]\!]$. Then the claim follows from \Cref{lemma:fJac}.\end{proof}
\begin{cor}\label{cor:separated-gen} We have that 
$\bigcap_{n \ge 1} \pi(J)^n = (0)$.\end{cor}
\begin{proof} Note that $S[\![q_1, \ldots, q_k]\!]$ is Noetherian from
\cite[Cor. 10.27]{AM:intro}. Then its quotient $M$ is also Noetherian, and by 
\Cref{lemma:piJac} 
we have that $\pi(J) \subset \Jac(M)$. The claim follows from a corollary a Krull's theorem,
\cite[Cor. 10.19]{AM:intro}, applied
to $M$ as a module over {$S[\![q_1, \ldots, q_k]\!]$}
and the ideal $\pi(J)$.
\end{proof}

Let us assume further that $S$ is an $R$-algebra for a Noetherian ring $R$. Let 
$$A\coloneqq R[\![q_1, \ldots, q_k]\!] \subset S[\![q_1, \ldots, q_k]\!]$$ with ideal $\mathfrak{a}=\langle q_1, \ldots, q_k\rangle \subset A$.

The goal of the Appendix is to prove the following Proposition, see also \cite[Exercise 7.8]{eisenbud:CAbook}.
\begin{prop}\label{prop:KNak} 
If $M/\mathfrak{a}M$ is a finitely generated \(A/\mathfrak{a}\)-module,
then 
\begin{enumerate}
    \item\label{item:fg} $M$ is a finitely generated $A$-module;
    \item\label{item:complete} \(M\) is \(\mathfrak{a}\)-adically complete.
\end{enumerate}
\end{prop}
\begin{proof} 
Note that $A$ is $\mathfrak{a}$-adically complete 
\cite[\S 7.1]{eisenbud:CAbook}, and that 
\[
    \bigcap_{n \ge 1} \mathfrak{a}^nM = \bigcap_{n \ge 1} J^n M= \bigcap_{n \ge 1} \pi(J)^n =(0)
\] 
by \Cref{cor:separated-gen}. 
Then part \eqref{item:fg} follows from \Cref{lemma:fgstacks}. Since $A$ is $\mathfrak{a}$-dically complete, it follows from \cite[Prop. 10.13]{AM:intro} that the $\mathfrak{a}$-adic completion of $M$ 
is $\widehat{M} = M \otimes_A \widehat{A}=M$, proving part \eqref{item:complete}. 
\end{proof}

\bibliographystyle{halpha}
\bibliography{biblio.bib}

\end{document}